\newtheorem{prop}{Proposition}[section]
\newtheorem{thm}[prop]{Theorem}
\newtheorem{cor}[prop]{Corollary}
\newtheorem{lem}[prop]{Lemma}
\newtheorem{nota}[prop]{Notation}
\newtheorem*{agra}{Acknowlegment}
\numberwithin{equation}{section}
\begin{document}

\title{On the Gorenstein Property of the Fiber Cone to Filtrations}
\author{P. H. Lima\thanks{Work partially supported by CNPq-Brazil 141973/2010-2 and by
Capes-Brazil.}\,\,\,\,and\,\,\,V. H. Jorge P\'erez
\thanks{Work supported by CNPq-Brazil - Grant
309033/2009-8, Procad-190/2007. {\it Key words}: fiber cone, associated graded ring, good and Hilbert filtration, reduction number.}}

\date{}
\maketitle

\begin{abstract} Let $(A, \mathfrak{m})$ be a Noetherian local ring and $\mathfrak{F}=(I_{n})_{n\geq 0}$ a filtration. In this paper, we study the Gorenstein properties of the fiber cone $F(\mathfrak{F})$, where $\mathfrak{F}$ is a Hilbert filtration. Suppose that $F(\mathfrak{F})$ and $G(\mathfrak{F})$ are Cohen-Macaulay. If in addition, the associated graded ring $G(\mathfrak{F})$ is Gorenstein; similarly to the $I$-adic case, we obtain a necessary and sufficient condition, in terms of lengths and minimal number of generators of ideals, for Gorensteiness of the fiber cone. Moreover, we find a description of the canonical module of $F(\mathfrak{F})$ and show that even in the Hilbert filtration case, the multiplicity of the canonical module of the fiber cone is upper bounded by multiplicity of the canonical modules of the associated graded ring.
\end{abstract}

\section{Introduction}
Let $(A, \mathfrak{m})$ be a Noetherian local ring and $A\supset I\supset I^{2}\supset...$ the adic-filtration. Then we have important graded algebras such as $R(I):=\oplus_{n\geq 0}I^{n}t^{n}$, the associated graded ring $G(I):=\oplus_{n\geq 0}I^{n}/I^{n+1}$ and the fiber cone $F(I):=\oplus_{n\geq 0}I^{n}/\mathfrak{m}I^{n}=R(I)/ \mathfrak{m}R(I)$. These algebras may be naturally generalized for the general filtration case of ideals, that is, $\mathfrak{F}: A \supset I_{1}\supset I_{2}\supset...$ .  We may ask when these algebras are Gorenstein or Cohen-Macaulay and other properties. This answer is given in many papers (\cite{HRZ}, \cite{GN}, \cite{HZ}, \cite{TVZ},
\cite{I}, \cite{HKU1}, \cite{HKU2}, \cite{JPV}). In \cite{HRZ}, it is determined, under certain classes of ideals, the exponent $n\geq 0$ such that $R(I^{n})$ and $G(I^{n})$ are Gorenstein. Goto and Nishida \cite{GN} investigate the Cohen-Macaulay and Gorenstein properties of symbolic Rees algebras for one-dimensional prime ideals in Cohen-Macaulay local rings, and then they generalize to Rees algebras $R(\mathfrak{F})$ and graded rings $G(\mathfrak{F})$ associated to general filtrations of ideals in arbitrary Noetherian local rings. Moreover, in \cite{HZ}, Zarzuela and Hoa give some consequences which generalize well known results in the $I$-adic case. For instance, they show that, under certain assumptions, the Gorensteinness of $R(\mathfrak{F})$ implies the Gorensteinness of $G(\mathfrak{F})$. Trung, Vi\^{e}t and Zarzuela, in \cite{TVZ}, extend the Ikeda's result \cite{I} to Rees algebra of filtrations. In the article \cite{HKU1}, Heinzer, Kim and Ulrich give a criterion, in terms of ideals, for the Gorensteinness of $G(I)$. Next they generalized this result for Hilbert filtrations \cite{HKU2}. Jayanthan, Puthenpurakal and Verma, in \cite{JPV}, study the Gorenstein property of $F(I)$ of $\mathfrak{m}$-primary ideals and the ones of almost minimal multiplicity.

In this article, we are interested on the Gorenstein property of the fiber cone to filtration. In a recent article, for the $I$-adic case, Jayanthan and Nanduri \cite{JN}, find descriptions, in terms of ideals, for the canonical module of the fiber cone. They also show that if both the associated graded ring and fiber cone are Cohen-Macaulay, then the regularities of the their canonical modules are equal. Furthermore, when $F(I)$ is Cohen-Macaulay and $G(I)$ is Gorenstein, they give a criterion to say when the fiber cone is Gorenstein. The main goal in this paper is to give an analogous theory on the Gorenstein property of the fiber cone for general filtration. We show that the results obtained in \cite{JN} may be generalized for the Hilbert filtration.

This paper is divided into two parts. In section 2 we introduce the basic concepts about good and Hilbert filtration, reduction of filtration, canonical module and regularity. In section 3 we extend the Theorem 15 and Theorem 33 in \cite{CZ} for the good filtration case. Moreover, we find for the Hilbert filtration case, Gorenstein properties of the fiber cone and descriptions of its canonical module in terms of ideals. We also see that the multiplicity of the canonical module of the fiber cone is less than or equal to the multiplicity of the canonical module of the associated graded ring even for Hilbert filtration and that if in addition, $G(\mathfrak{F})$ is Gorenstein, the equality is true if only if $I_{1}=\mathfrak{m}$. The main results in this section generalize the results in the section 3 of \cite{JN} to Hilbert filtration.

\section{Preliminaries}
A sequence $\mathfrak{F}=(I_{n})_{n\geq 0}$ of ideals of $A$ is called a \emph{filtration} of $A$ if $I_{0}=A\supset I_{1}\supseteq I_{2}\supseteq I_{3}\supseteq ...,$ $I_{1}\neq A$ and $I_{i}I_{j}\subseteq I_{i+j}$ for all $i,j\geq 0$.

Let $I$ be and ideal of $A$. $\mathfrak{F}$ is called an $I$-\emph{good filtration} if $II_{i}\subseteq I_{i+1}$ for all $i\geq 0$ and $I_{n+1}=II_{n}$ for all $n\gg 0$. $\mathfrak{F}$ is called a \emph{good filtration} if it is an $I$-good filtration for some ideal $I$ of $A$. $\mathfrak{F}$ is a good-filtration if and only if it is a $I_{1}$-good filtration.

Given any filtration $\mathfrak{F}$, we can construct the following graded rings
$$
\begin{array}{c}
  R(\mathfrak{F})=A \oplus I_{1}t \oplus I_{2}t^{2}\oplus ... \  , \ R^{'}(\mathfrak{F})=\bigoplus_{n\in \mathbb{Z}} I_{n}t^{n} \subseteq A[t,t^{-1}], \vspace{0.3cm} \\
  G(\mathfrak{F})= R^{'}(\mathfrak{F}) / (t^{-1})R^{'}(\mathfrak{F})   =A/I_{1}\oplus I_{1}/I_{2} \oplus I_{2}/I_{3} \oplus ...
\end{array}
$$
We call $R(\mathfrak{F})$ the \emph{Rees algebra} of $\mathfrak{F}$, $R^{'}(\mathfrak{F})$ the \emph{Extended Rees algebra} of $\mathfrak{F}$ and $G(\mathfrak{F})$ the \emph{associated graded ring} of $\mathfrak{F}$.
We also denote the \emph{irrelevant ideal} of $G(\mathfrak{F})$ by $G(\mathfrak{F})_{+}=\oplus_{n\geq 1}I_{n}/I_{n+1}$. If $\mathfrak{F}$ is an $I$-adic filtration, i.e, $\mathfrak{F}=(I^{n})_{n\geq 0}$ for some ideal $I$, we denote $R(\mathfrak{F})$ and $G(\mathfrak{F})$ by $R(I)$ and $G(I)$, respectively. A filtration $\mathfrak{F}$ is called \emph{Noetherian} if $R(\mathfrak{F})$ is a Noetherian ring. Noetherian filtration satisfies $\cap_{n\geq 0}I_{n}=0$. By adapting the proof of \cite[Theorem 15.7]{M}, one can prove that if $\mathfrak{F}$ is Noetherian, then $\dim \hspace{0.01cm} G(\mathfrak{F})= \dim \hspace{0.01cm} A$. We often denote $\text{depth}  \hspace{0.01cm} G(\mathfrak{F})$ for $\text{depth}_{\mathfrak{M}}  G(\mathfrak{F})$ where $\mathfrak{M}=\mathfrak{m}/I_{1}\oplus I_{1}/I_{2}\oplus I_{2}/I_{3}...$ is the unique homogeneous maximal ideal of $G(\mathfrak{F})$.

A filtration $\mathfrak{F}$ which is $I_{1}$-good and $I_{1}$ is $\mathfrak{m}$-primary, is called \emph{Hilbert filtration}. If $\mathfrak{F}$ is a Hilbert filtration, then $G(\mathfrak{F})$ is a finite $G(I_{1})$-module.

For any filtration $\mathfrak{F}= (I_{n})$ and any ideal $J$ of $A$, one denote $\mathfrak{F}/J$ to be the filtration $((I_{n}+J)/J)_{n}$ in the ring $A/J$. Besides, if $\mathfrak{F}$ is Noetherian (Hilbert) then $\mathfrak{F}/J$ is Noetherian (Hilbert).

A \emph{reduction} of a filtration $\mathfrak{F}$ is an ideal $J\subseteq I_{1}$ such that $JI_{n}=I_{n+1}$ for $n\gg 0$. We also know that $J\subseteq I_{1}$ is a reduction of $\mathfrak{F}$ if and only if $R(\mathfrak{F})$ is a finite $R(I_{1})$-module. By \cite[Theorem III.3.1.1 and Corollary III.3.1.4]{B}, $R(\mathfrak{F})$ is a finite $R(I_{1})$-module if and only if there exists an integer $k$ such that $I_{n}\subseteq (I_{1})^{n-k}$ for all $n$. A \emph{minimal reduction} of $\mathfrak{F}$ is a reduction of $\mathfrak{F}$ minimal with respect to containment. By \cite[Proposition 2.6]{HZ}, minimal reductions of $\mathfrak{F}$ do exist. A good filtration $\mathfrak{F}$ is called \emph{equimultiple} if $I_{1}$ is equimultiple, i.e, $s(I_{1})=  \text{ht} \ I_{1}$. Hilbert filtrations are examples of equimultiple filtrations.

If $J$ is a reduction of the $I$-adic filtration, we say simply that $J$ is a reduction of $I$. By \cite{NR},  minimal reduction of ideals always exist. If $R(\mathfrak{F})$ is a $R(I_{1})$-module, then $J$ is a reduction of $\mathfrak{F}$ if and only if $J$ is a reduction of $I_{1}$. Thus minimal reductions of good filtration always exist and are generated by $\dim A$ elements if $A/\mathfrak{m}$ is infinite. For minimal reduction $J$ of $\mathfrak{F}$ we set $r_{J}(\mathfrak{F})= \min\{n\in \mathbb{Z} \ | \ I_{n}= JI_{n-1}\}.$ The \emph{reduction number} of $\mathfrak{F}$ is defined as $r(\mathfrak{F})= \min \{ r_{J}(\mathfrak{F}) \ | \ J  \text{ is minimal reduction of} \ \mathfrak{F}\}$.
Let $\mathfrak{F}$ be a Noetherian filtration. For any element $x\in I_{1}$, we let $x^{*}$ denote the image of $x$ in $G(\mathfrak{F})_{1}=I_{1}/I_{2}$ and $x^{o}$ denote the image of $x$ in $F(\mathfrak{F})_{1}=I_{1}/\mathfrak{m}I_{1}$. If $x^{*}$ is a regular element of $G(\mathfrak{F})$, then $x$ is a regular element of $A$ and by \cite[Lemma 3.4]{HZ}, $G(\mathfrak{F}/(x))\cong G(\mathfrak{F})/(x^{*})$.

An element $x\in I_{1}$ is called \emph{superficial} for $\mathfrak{F}$ if there exists an integer $c$ such that $(I_{n+1}: \ x)\cap I_{c}=I_{n}$ for all $n\geq c$. By \cite[Remark 2.10]{HZ}, an element $x$ is superficial for $\mathfrak{F}$ if and only if $(0:_{G(\mathfrak{F})} \ x^{*})_{n}=0$ for all $n$ sufficiently large. If $\text{grade} \ I_{1} \geq 1$ and $x$ is superficial for $\mathfrak{F}$, then $x$ is a regular element of $A$. To see that, let suppose that $x$ is not a zero-divisor. Thus if $ux=0$, then $(I_{1})^{c}u\subseteq \cap_{n}((I_{n}: \ x)\cap I_{c})=\cap_{n}I_{n}=0$. Hence $u=0$.

A sequence $x_{1},...,x_{k}$ is called a \emph{superficial sequence} for $\mathfrak{F}$ if $x_{1}$ is superficial for $\mathfrak{F}$ and $x_{i}$ is superficial  for $\mathfrak{F}/(x_{1},...,x_{i-1})$ where $2\leq i \leq k$.

Let $f_{1},...,f_{r}$ be a sequence of homogeneous elements of a noetherian graded algebra $S=\oplus_{n\geq 0}S_{n}$ over a local ring $S_{0}$. It is called \emph{filter-regular sequence} of $S$ if $f_{i}\not\in \mathfrak{p}$ for all primes $\mathfrak{p}\in \text{Ass}(S/(f_{1},...,f_{i-1}))$ such that $S_{+}\not\subseteq \mathfrak{p}$, $i=1,...,r$.

Let $(A, \mathfrak{m})$ be a local ring and $\mathfrak{F}$ a good filtration. Then $v_{1},...,v_{t}\in I_{1}$ are \emph{analytically independent} in $\mathfrak{F}$ if and only if, whenever $h\in \mathbb{N}$ and $f\in A[X_{1},...,X_{t}]$ (the ring of polynomials over $A$ in $t$ indeterminates) is a homogeneous polynomial of degree $h$ such that $f(v_{1},...,v_{t})\in I_{h}\mathfrak{m}$, then all coefficients of $f$ lie in $\mathfrak{m}$. Moreover, if $v_{1},...,v_{t}\in I_{1}$ are analytically independent in $\mathfrak{F}$ and $J=(v_{1},...,v_{t})$, then $J^{h}\cap I_{h}\mathfrak{m}=J^{h}\mathfrak{m}$ for all $h\in \mathbb{N}$.

Now let define the analytic spread of a filtration $\mathfrak{F}$. Firstly we define
$$
F(\mathfrak{F})=\bigoplus_{n \geq 0} \frac{ I_{n} }  { \mathfrak{m}I_{n} }= A/\mathfrak{m} \oplus I_{1}/I_{2} \oplus ...,
$$
which is called \emph{Fiber Cone} of $\mathfrak{F}$.
The number $s=s(\mathfrak{F})= \dim F(\mathfrak{F}) =\dim R(\mathfrak{F})/\mathfrak{m}R(\mathfrak{F})$ \cite{HZ} is said to be the \emph{analytic spread} of $\mathfrak{F}$. Thus, when $\mathfrak{F}$ is the $I$-adic filtration, the analytic spread $s(I)$ equals $s(\mathfrak{F})$. Rees introduced the notion of basic reductions of Noetherian filtrations and he showe in \cite[Theorem 6.12]{R} that $s(\mathfrak{F})$ equals the minimal number of generators of any minimal reduction. By \cite[Lemma 2.7]{HZ}, $s(\mathfrak{F})=\dim G(\mathfrak{F})/\mathfrak{m}G(\mathfrak{F})$ and by \cite[Lemma 2.8]{HZ}, $s(\mathfrak{F})=s(I_{1})$.

If $(S,\mathfrak{m})$ is a Cohen-Macaulay $^{*}$local ring of $^{*}$dimension $d$, a finite graded $S$-module $\omega_{S}$ is a $^{*}$\emph{canonical module} of $S$ if there exists homogeneous isomorphisms
$$
^{*}\text{Ext}_{S}^{  i}  (S/\mathfrak{m}, \omega_{S}) \cong  \left\{  \begin{array}{cc}
                                                                   0 & \text{for} \ i \neq d \\
                                                                   S/\mathfrak{m} & \text{for} \ i = d
                                                                 \end{array} \right.
$$
Moreover, $\omega_{S}$ is a Cohen-Macaulay graded $A$-module with $\text{depth} \ \omega_{S}= \dim S$ and finite $^{*}$injective dimension \cite{HIO}. It is also known that $S$ is Gorenstein if and only if $\omega_{S} \cong S(a)$ for some integer $a\in \mathbb{Z}$  \cite{BH}.

Let $S=\bigoplus_{n\geq 0}S_{n}$ be a finitely generated standard graded ring over a Noetherian commutative ring $S_{0}$. For any graded $S$-module $M$, we denote by $M_{n}$, the homogeneous part of degre $\emph{n}$ of $M$ and one define
$$
a(M):= \left\{ \begin{array}{ll}
           \max \{n \ | \ M_{n}\neq 0 \} & \text{if}  \  M \neq 0 \\
           -\infty & \text{if} \ M = 0
         \end{array}
         \right
 .$$
Let $S_{+}$ be the ideal generated by the homogeneous elements of positive degree of $S$. For $i\geq 0$, set $$a_{i}(S):=a(H_{S_{+}}^{i}(S)),$$ where $H_{S_{+}}^{i}(.)$ denotes the \emph{i}-th local cohomology functor with respect to the ideal $S_{+}$. More generally, for $i\geq 0$ and any graded $S$-module $M$, set
$$a_{i}(M):=a(H_{S_{+}}^{i}(M)),$$ where $H_{S_{+}}^{i}(M)$ denotes the \emph{i}-th local cohomology module of M with respect to the irrelevant ideal $S_{+}$. The \emph{Castelnuovo-Mumford regularity} (or simply \emph{regularity}) of $M$ is defined as the number $$\text{reg}(M):= \max\{a_{i}(M)+i \ | \ i\geq 0\}.$$
When $M=S$, the regularity $\text{reg} \ S$ is an important invariant of the graded ring $S$, \cite{EG} and \cite{O}.

\section{Gorenstein Property of the Fiber Cone to Filtrations}
Throughout this section, $(A,\mathfrak{m})$ is a Noetherian local ring of dimension $d$ with an infinite residue field and $\mathfrak{F}$ a good filtration. Moreover we always suppose that $F(\mathfrak{F})$ and $G(\mathfrak{F})$ are Cohen-Macaulay. If the dimension of the fiber cone is greater than zero, by \cite[Lemma 3.1]{CZ0}, $I_{n+1}\subseteq\mathfrak{m}I_{n}$ for all $n\geq 0$ and then $F(\mathfrak{F})=G(\mathfrak{F})/\mathfrak{m}G(\mathfrak{F})$. In this article we assume this fact if $A$ has dimension zero. It is required in the most of the results of this article such as the Proposition below. In \cite{JN}, Jayanthan and Nanduri obtain, for a the $I$-adic filtration case, an expression for the canonical module of the fiber cone. We verify this result remains true for the canonical module of the fiber cone if the filtration $\mathfrak{F}$ is such that $I_{1}$ is $\mathfrak{m}$-primary (Hilbert filtration).

\begin{prop}\label{prop3.1}
Let $(A,\mathfrak{m})$ be a Noetherian local ring and $\mathfrak{F}$ a Hilbert filtration such that the associated graded ring $G(\mathfrak{F})$ is Cohen-Macaulay. Let $\omega_{G(\mathfrak{F})}=\oplus_{n\in \mathbb{Z}}\omega_{n}$ and $\omega_{F(\mathfrak{F})}$ be the canonical modules of $G(\mathfrak{F})$ and $F(\mathfrak{F})$ respectively. Then

\begin{description}
           \item[(1)] $\omega_{F(\mathfrak{F})}\cong \oplus_{n\in \mathbb{Z}}(0:_{\omega_{n}} \mathfrak{m});$

           \item[(2)] $a(F(\mathfrak{F}))=a(G(\mathfrak{F}))=r_{J}(\mathfrak{F})-d$, where $r:=r_{J}(\mathfrak{F})$ is the reduction number of $\mathfrak{F}$ with respect to $J$ to any reduction minimal $J \subseteq I_{1}$;

           \item[(3)] If $\dim A> 0$, $a(F(\mathfrak{F}^{(k)}))=[\frac{a(F(\mathfrak{F}))}{k}]=[\frac{r-d}{k}]$ for any $k\in \mathbb{N}$.

           \item[(4)] if $G(\mathfrak{F})$ is Gorenstein, $$\omega_{F(\mathfrak{F})} \cong \bigoplus_{n\in \mathbb{Z}}\frac{(I_{n+r-d+1}:\mathfrak{m})\cap I_{n+r-d}}{I_{n+r-d+1}} .$$
\end{description}

\end{prop}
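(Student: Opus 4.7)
The plan is to reduce all four parts to the identity $F(\mathfrak{F})=G(\mathfrak{F})/\mathfrak{m}G(\mathfrak{F})$ noted in the paragraph preceding the statement, together with the fact that, since $I_1$ is $\mathfrak{m}$-primary, $\dim F(\mathfrak{F})=s(\mathfrak{F})=s(I_1)=d=\dim G(\mathfrak{F})$. Thus $F(\mathfrak{F})$ is a Cohen-Macaulay quotient of the Cohen-Macaulay ring $G(\mathfrak{F})$ of the \emph{same} dimension, which is precisely the situation in which the canonical module of the quotient is computed by $\operatorname{Hom}$.

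For (1), I would invoke the general principle that when $S$ is $^{*}$Cohen-Macaulay graded and $K\subseteq S$ is a graded ideal with $S/K$ also $^{*}$Cohen-Macaulay of the same dimension as $S$, one has $\omega_{S/K}\cong\operatorname{Hom}_S(S/K,\omega_S)=(0:_{\omega_S}K)$ (Bruns--Herzog, in the graded setting). Applied with $S=G(\mathfrak{F})$ and $K=\mathfrak{m}G(\mathfrak{F})$, this yields $\omega_{F(\mathfrak{F})}\cong(0:_{\omega_{G(\mathfrak{F})}}\mathfrak{m})=\bigoplus_{n}(0:_{\omega_n}\mathfrak{m})$.

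For (2), the equality $a(G(\mathfrak{F}))=r_J(\mathfrak{F})-d$ is classical: taking $J=(x_1,\dots,x_d)$ a minimal reduction, the initial forms $x_1^{*},\dots,x_d^{*}$ form a homogeneous system of parameters, hence (by Cohen-Macaulayness of $G(\mathfrak{F})$) a regular sequence; the quotient $G(\mathfrak{F})/(x_1^{*},\dots,x_d^{*})$ is concentrated in degrees $\leq r_J(\mathfrak{F})$ and nonzero in degree $r_J(\mathfrak{F})$ by the very definition of the reduction number, so its $a$-invariant equals $r_J(\mathfrak{F})$; killing $d$ degree-one regular elements raises $a$ by $d$, giving $a(G(\mathfrak{F}))=r_J(\mathfrak{F})-d$. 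For $a(F(\mathfrak{F}))=a(G(\mathfrak{F}))$, use (1): each $(\omega_G)_n$ is a finitely generated module over $G(\mathfrak{F})_0=A/I_1$ and therefore a finite-length $A$-module (because $I_1$ is $\mathfrak{m}$-primary); by Nakayama its socle $(0:_{\omega_n}\mathfrak{m})$ is nonzero whenever $\omega_n$ is, so $\omega_F$ and $\omega_G$ have the same extremal nonzero degree, and the two $a$-invariants coincide.

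For (3), the equality $F(\mathfrak{F}^{(k)})_n=I_{nk}/\mathfrak{m}I_{nk}=F(\mathfrak{F})_{nk}$ identifies $F(\mathfrak{F}^{(k)})$ with the $k$-th Veronese subring $F(\mathfrak{F})^{(k)}$; the standard Veronese formula $a(S^{(k)})=[a(S)/k]$ for $S$ Cohen-Macaulay of positive dimension (where $\dim A>0$ is used), combined with (2), yields the claim. For (4), Gorensteinness of $G(\mathfrak{F})$ gives $\omega_{G(\mathfrak{F})}\cong G(\mathfrak{F})(r-d)$, hence $\omega_n=G(\mathfrak{F})_{n+r-d}=I_{n+r-d}/I_{n+r-d+1}$; substituting into (1) and computing the colon by lifting to $A$ produces the stated description. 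The main obstacle is really step (1): one must check carefully that $(0:_{\omega_G}\mathfrak{m})$, as a graded $G(\mathfrak{F})$-module, descends to the $^{*}$canonical module of $F(\mathfrak{F})$ in the $^{*}$local framework of the paper, with no spurious grading shift. Once that identification is secured, (2)--(4) become essentially algebraic corollaries.
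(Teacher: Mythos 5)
Your parts \textbf{(1)}, \textbf{(2)} and \textbf{(4)} are correct and essentially identical to the paper's own proof. The paper also deduces \textbf{(1)} from $\dim F(\mathfrak{F})=\dim G(\mathfrak{F})=d$ plus the Hom-descent principle for Cohen--Macaulay quotients of the same dimension (it cites HIO, Corollary 36.14, where you cite the Bruns--Herzog graded version; there is indeed no grading shift precisely because the dimensions agree). It proves $a(F(\mathfrak{F}))=a(G(\mathfrak{F}))$ by exactly your socle argument: each $\omega_n$ has finite length over $A/I_1$, so its socle is nonzero whenever $\omega_n$ is (the paper phrases this via essential extensions of Artinian modules, you via Nakayama; both are fine). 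For $a(G(\mathfrak{F}))=r_J(\mathfrak{F})-d$ the paper cites Hoa--Zarzuela, Proposition 3.6, where you reprove it directly with the homogeneous-system-of-parameters argument; that is a legitimate substitute. Part \textbf{(4)} is the same substitution in both.

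Part \textbf{(3)} is where you genuinely depart from the paper, and where there is a gap. Your identification $F(\mathfrak{F}^{(k)})=F(\mathfrak{F})^{(k)}$ is correct and is a nice observation: it holds because $F(\mathfrak{F})_n=I_n/\mathfrak{m}I_n$ depends only on $I_n$, whereas the analogous statement for the associated graded ring is false ($G(\mathfrak{F}^{(k)})_n=I_{nk}/I_{nk+k}$, not $I_{nk}/I_{nk+1}$), which is exactly why the paper instead invokes the filtration-specific results of Hoa--Zarzuela (Corollary 4.6 and Theorem 4.2 applied to $G(\mathfrak{F}^{(k)})$, then part (2) applied to $\mathfrak{F}^{(k)}$). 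The problem is the next step: the ``standard Veronese formula'' $a(S^{(k)})=[a(S)/k]$ is a theorem about \emph{standard} graded rings, and $F(\mathfrak{F})$ need not be standard graded for a Hilbert filtration --- goodness only gives $F_1F_n=F_{n+1}$ for $n\gg 0$. For a non-standard graded Cohen--Macaulay ring of positive dimension the formula can fail outright: take $S=k[x]$ with $\deg x=2$; then $a(S)=-2$, while $S^{(3)}\cong k[x^3]$ with $x^3$ in degree $2$, so $a(S^{(3)})=-2\neq[a(S)/3]$. The underlying issue is that $a(S^{(k)})=\max\{n \mid H^{d}_{S_+}(S)_{nk}\neq 0\}$, so one needs $H^{d}_{S_+}(S)$ to be nonzero in \emph{every} degree $\leq a(S)$; for Cohen--Macaulay $S$ this amounts to having a degree-one element regular on $\omega_S$, which is automatic in the standard graded case but not in general. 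The gap is repairable in your setting: since $\mathfrak{F}$ is good, $F(\mathfrak{F})/F_1F(\mathfrak{F})$ has finite length, so no prime $P$ with $\dim F(\mathfrak{F})/P=d$ contains $F_1$; as the residue field is infinite and every associated prime of $\omega_{F(\mathfrak{F})}$ is such a prime, prime avoidance yields $x\in F_1$ regular on $\omega_{F(\mathfrak{F})}$, whence the no-gap property and your formula. Alternatively, fall back on the paper's route through $G(\mathfrak{F}^{(k)})$. Either way, an ingredient beyond the off-the-shelf Veronese formula is required, and as written your argument does not supply it.
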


\begin{proof}
\textbf{(1)} By adapting the proof of \cite[Theorem 15.7]{M}, one can prove $\dim G(\mathfrak{F})=d$. Since $I_{1}$ is an $\mathfrak{m}$-primary ideal, $\dim F(I_{1})=d$. By \cite[Lemma 2.8]{HZ}, $\dim F(\mathfrak{F})= \dim F(I_{1})$. Thus $\dim G(\mathfrak{F}) = \dim F(\mathfrak{F})=d$. By hypothesis, $G(\mathfrak{F})$ is Cohen-Macaulay, so by using \cite[Corollary 36.14]{HIO}, we have $$\omega_{F(\mathfrak{F})} \cong \text{Hom}_{G(\mathfrak{F})}(F(\mathfrak{F}), \omega_{G(\mathfrak{F})}).$$ It is easy to show that
$$\text{Hom}_{G(\mathfrak{F})}(F(\mathfrak{F}), \omega_{G(\mathfrak{F})})\cong (0:_{\omega_{G(\mathfrak{F})}}\mathfrak{m}G(\mathfrak{F}))=(0:_{\omega_{G(\mathfrak{F})}}\mathfrak{m})=\oplus_{n \in \mathbb{Z}}(0:_{\omega_{n}}\mathfrak{m}).$$
The result follows.

\textbf{(2)} By \cite[Definition 3.6.13]{BH}, $a(F(\mathfrak{F}))=-\min \{n | [\omega_{F(\mathfrak{F})}]_{n}\neq 0 \}$. Since $A/I_{1}$ is Artinian and $\omega_{n}$ is a finite $A/I_{1}$-module, it follows that $\omega_{n}$ is Artinian. By \cite[Theorem A.33]{ILL}, $\omega_{n}$ is a $\mathfrak{m}/I_{1}$- torsion and by \cite[Example A.9]{ILL}, $$(0:_{\omega_{n}}\mathfrak{m})=(0:_{\omega_{n}}\mathfrak{m}/I_{1})\subseteq \omega_{n}$$ is an essential extension. Thus
\begin{equation}\label{extessencial}
(0:_{\omega_{n}}\mathfrak{m})\neq 0 \ \text{if and only if } \omega_{n} \neq 0.
\end{equation}
Therefore by \textbf{(1)}, $a(F(\mathfrak{F}))=a(G(\mathfrak{F}))$.

Through \cite[Theorem 4.5.7]{BH}, one has $A$ is Cohen-Macaulay. Note also that $\mathfrak{F}$ is equimultiple since $I_{1}$ is $\mathfrak{m}$-primary. As $\text{grade} \hspace{0.05cm} G(\mathfrak{F})_{+}=d$, the hypothesis of \cite[Proposition 3.6]{HZ} are satisfied and then the second equality follows.

\textbf{(3)} By \cite[Corollary 4.6]{HZ}, $G(\mathfrak{F}^{(k)})$ is Cohen-Macaulay. As $\dim A/I_{1}=0<d$, by \cite[Theorem 4.2]{HZ}, $a(G(\mathfrak{F}^{(k)}))=[\frac{a(G(\mathfrak{F}))}{k}]$. Applying \textbf{(2)} to Veronesean filtration $\mathfrak{F}^{(k)}$, we have
$$a(F(\mathfrak{F}^{(k)}))=a(G(\mathfrak{F}^{(k)}))=[\frac{a(G(\mathfrak{F}))}{k}]=[\frac{a(F(\mathfrak{F}))}{k}]=[\frac{r-d}{k}].$$

\textbf{(4)} Let suppose that $G(\mathfrak{F})$ is Gorenstein. By \cite[36.13]{HIO}, $$\omega_{G(\mathfrak{F})} \cong G(\mathfrak{F})(a(G(\mathfrak{F})))=G(\mathfrak{F})(r-d).$$ That is $\omega_{n}= I_{n+r-d}/I_{n+r-d+1}$ for any $n$. By using the last fact and \textbf{(1)}, we have
$$\omega_{F(\mathfrak{F})}\cong \oplus_{n\in \mathbb{Z}}(0:_{\omega_{n}} \mathfrak{m})=\bigoplus_{n\in \mathbb{Z}}\frac{(I_{n+r-d+1}:\mathfrak{m})\cap I_{n+r-d}}{I_{n+r-d+1}}$$
\begin{equation}  \label{n>d-r}  \hspace{6,018cm} \hspace{-0.1cm} \left( =\bigoplus_{n\geq d-r}\frac{(I_{n+r-d+1}:\mathfrak{m})\cap I_{n+r-d}}{I_{n+r-d+1}} \right).\end{equation}

\end{proof}

\begin{cor}
 Suppose that $A$ has dimension $\geq 2$. If $R(\mathfrak{F})$ is Gorenstein,
$$\omega_{F(\mathfrak{F})}\cong \oplus_{n\in \mathbb{Z}}(0:_{\omega_{n}} \mathfrak{m})=\bigoplus_{n\in \mathbb{Z}}\frac{(I_{n-1}:\mathfrak{m})\cap I_{n-2}}{I_{n-1}}.$$
That is, the canonical module $\omega_{F(\mathfrak{F})}$ does not depend on the dimension $d$ and the number reduction $r$.
\end{cor}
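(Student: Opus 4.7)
The plan is to reduce to Proposition \ref{prop3.1} by determining $r-d$ explicitly under the Gorenstein hypothesis on $R(\mathfrak{F})$. The first displayed isomorphism is exactly part (1) of Proposition \ref{prop3.1}, and holds under the standing assumption that $G(\mathfrak{F})$ is Cohen-Macaulay. So the real task is to match the second identification with part (4) and to show that the reduction number satisfies $r = d-2$.

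For the Gorensteinness of $G(\mathfrak{F})$, I would use that $t^{-1}$ is a non-zero-divisor on the extended Rees algebra $R'(\mathfrak{F})$ with $G(\mathfrak{F}) \cong R'(\mathfrak{F})/(t^{-1})$; under our Cohen-Macaulay hypotheses, the Gorensteinness of $R(\mathfrak{F})$ passes to $R'(\mathfrak{F})$ by the results of \cite{HZ}, and then killing the regular element $t^{-1}$ yields Gorensteinness of $G(\mathfrak{F})$. Next I would invoke the filtration analogue of Ikeda's theorem, proved in \cite{TVZ} and \cite{HZ}: when $R(\mathfrak{F})$ is Gorenstein and $\dim A \geq 2$, the $a$-invariant of $G(\mathfrak{F})$ equals $-2$. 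Comparing with Proposition \ref{prop3.1}(2), which gives $a(G(\mathfrak{F})) = r - d$, this forces $r - d = -2$.

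Substituting $r-d = -2$ into the formula of Proposition \ref{prop3.1}(4), the shifts become $n+r-d+1 = n-1$ and $n+r-d = n-2$, so
$$\omega_{F(\mathfrak{F})} \cong \bigoplus_{n \in \mathbb{Z}} \frac{(I_{n-1}:\mathfrak{m}) \cap I_{n-2}}{I_{n-1}},$$
which is exactly the claim; the independence from $d$ and $r$ then follows tautologically since both parameters have been eliminated from the expression. The main (and essentially only non-cosmetic) obstacle is justifying $a(G(\mathfrak{F})) = -2$ in the filtration setting; this is where the hypothesis $d \geq 2$ is indispensable, since otherwise the Ikeda-type argument does not apply and the shift $r-d$ could take a different value.
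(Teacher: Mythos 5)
Your proposal is correct and takes essentially the same route as the paper: the paper's entire proof is the single citation of \cite[Corollary 3.11]{HZ}, which is precisely the Ikeda-type result you invoke (Gorensteinness of $R(\mathfrak{F})$ together with $\dim A\geq 2$ yields Gorensteinness of $G(\mathfrak{F})$ with $a(G(\mathfrak{F}))=-2$), after which the substitution $r-d=-2$ into Proposition \ref{prop3.1}(4) is the same computation in both arguments. Your additional sketch of how that cited result is proved (passing to $R'(\mathfrak{F})$ and killing the regular element $t^{-1}$) is not needed by the paper but is consistent with how \cite{HZ} and \cite{TVZ} establish it.
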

\begin{proof}
Use \cite[Corollary 3.11]{HZ}.
\end{proof}
\begin{nota}

By \emph{\cite[\text{Proposition 2.2}]{JV}}, we get a minimal reduction $J=(x_{1},...,x_{d})$ of $\mathfrak{F}$ such that $x_{1}^{*},...,x_{d}^{*}\in G(\mathfrak{F})$ and $x_{1}^{o},...,x_{d}^{o}\in F(\mathfrak{F})$ are superficial sequence. Denote $J_{i}=(x_{1},...,x_{i})$, $J_{0}=0$ and $J^{*}=(x_{1}^{*},...,x_{d}^{*})$, $J^{o}=(x_{1}^{o},...,x_{d}^{o})$.

\end{nota}

\begin{lem}\label{lemgor}
Suppose $G(\mathfrak{F})$ and $F(\mathfrak{F})$ are Cohen-Macaulay rings. Then $G(\mathfrak{F})$ is Gorenstein if and only if $G(\mathfrak{F}/J_{i})$ is Gorenstein and $F(\mathfrak{F})$ is Gorenstein if and only if $F(\mathfrak{F}/J_{i})$ is Gorenstein.
\end{lem}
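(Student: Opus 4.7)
The plan is to reduce the Gorenstein question for $G(\mathfrak{F})$ and $F(\mathfrak{F})$ to the Gorenstein question for their respective quotients by the $i$-th initial segment of the given superficial sequence, and then invoke the standard fact that, over a Cohen--Macaulay graded local ring, quotienting by a regular sequence of linear forms preserves and reflects Gorensteinness.

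First I would verify that the chosen sequences are in fact regular sequences on the respective rings. Since $G(\mathfrak{F})$ is Cohen--Macaulay of dimension $d$, its depth on the irrelevant ideal is also $d$, so no associated prime of $G(\mathfrak{F})$ contains $G(\mathfrak{F})_+$. By the superficial characterization recalled in the preliminaries ($(0:_{G(\mathfrak{F})} x_1^*)_n=0$ for $n\gg 0$), the annihilator of $x_1^*$ is concentrated in large degree complements, hence has support in $G(\mathfrak{F})_+$, and therefore must vanish; so $x_1^*$ is regular on $G(\mathfrak{F})$. The same argument applied inductively to the Cohen--Macaulay quotient $G(\mathfrak{F})/(x_1^*,\ldots,x_{i-1}^*)$ together with the superficial condition on $x_i^*$ for $\mathfrak{F}/J_{i-1}$ gives that $x_1^*,\ldots,x_i^*$ is a regular sequence on $G(\mathfrak{F})$. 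Entirely analogously, using that $F(\mathfrak{F})$ is Cohen--Macaulay and that $x_1^o,\ldots,x_d^o$ is a superficial sequence, one shows $x_1^o,\ldots,x_i^o$ is a regular sequence on $F(\mathfrak{F})$.

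Second, I would identify the quotients with the rings of the filtration $\mathfrak{F}/J_i$. Iterating the isomorphism $G(\mathfrak{F}/(x))\cong G(\mathfrak{F})/(x^*)$ from \cite[Lemma 3.4]{HZ}, valid whenever $x^*$ is regular, yields
\[
G(\mathfrak{F}/J_i)\;\cong\;G(\mathfrak{F})/(x_1^*,\ldots,x_i^*).
\]
The analogous statement for the fiber cone,
\[
F(\mathfrak{F}/J_i)\;\cong\;F(\mathfrak{F})/(x_1^o,\ldots,x_i^o),
\]
is proved along the same lines: working one element at a time, one has a surjection from $F(\mathfrak{F})/(x_1^o)$ onto $F(\mathfrak{F}/(x_1))$, and computing kernels degree by degree reduces to showing $(I_{n+1}+x_1 I_n+\mathfrak{m} I_n)\cap (\text{appropriate module})$ vanishes, which follows from the regularity of $x_1^o$ on $F(\mathfrak{F})$ together with the superficial-to-regular argument already invoked. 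This step is where I expect the main technical care to lie, since the fiber-cone analogue of \cite[Lemma 3.4]{HZ} is not explicitly recorded in the preliminaries and must be assembled from the superficial/regularity input.

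Once the two displayed isomorphisms are in place, the conclusion is purely homological. Since $x_1^*,\ldots,x_i^*$ (respectively $x_1^o,\ldots,x_i^o$) is a regular sequence of homogeneous elements on the Cohen--Macaulay graded local ring $G(\mathfrak{F})$ (respectively $F(\mathfrak{F})$), the standard fact \cite[Proposition 3.1.19(b)]{BH} gives that $G(\mathfrak{F})/(x_1^*,\ldots,x_i^*)$ is Gorenstein iff $G(\mathfrak{F})$ is Gorenstein, and likewise for $F(\mathfrak{F})$. Combining with the isomorphisms of the previous step gives both biconditionals of the lemma.
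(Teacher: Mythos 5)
Your proposal is correct and follows essentially the same route as the paper's proof: the paper likewise deduces that the superficial sequences $x_1^*,\ldots,x_d^*$ and $x_1^o,\ldots,x_d^o$ are regular sequences from the Cohen--Macaulay hypothesis (citing an argument as in \cite[Theorem 8]{P}), identifies $G(\mathfrak{F})/(x_1^*,\ldots,x_i^*)\cong G(\mathfrak{F}/J_i)$ and $F(\mathfrak{F})/(x_1^o,\ldots,x_i^o)\cong F(\mathfrak{F}/J_i)$ (citing \cite[Lemma 4.5]{LP} rather than re-deriving the fiber-cone isomorphism as you do), and then concludes with \cite[Proposition 3.1.19]{BH}. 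The only difference is that you supply in-line arguments for steps the paper handles by citation.
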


\begin{proof}
Let $J=(x_{1},...,x_{d})$ be a minimal reduction of $\mathfrak{F}$ such that $x_{1}^{*},...,x_{d}^{*}\in G(\mathfrak{F})$ and $x_{1}^{o},...,x_{d}^{o}\in F(\mathfrak{F})$ are superficial sequence. Since $G(\mathfrak{F})$ and $F(\mathfrak{F})$ are Cohen-Macaulay rings, these sequences are also regular sequences (the argument is similar to \cite[Theorem 8]{P}). Then for any $i$ such that $1\leq i\leq d$, $G(\mathfrak{F})/(x_{1}^{*},...,x_{i}^{*})\cong G(\mathfrak{F}/(x_{1},...,x_{i}))$ and $F(\mathfrak{F})/(x_{1}^{*},...,x_{i}^{*})\cong F(\mathfrak{F}/(x_{1},...,x_{i}))$ (see \cite[Lemma 4.5]{LP}). By \cite[Proposition 3.1.19]{BH}, the result follows.
\end{proof}

\begin{lem}\label{lemz}
Suppose $\ell=\dim F(\mathfrak{F})=1$ and let $J=(a)$ be a minimal reduction of $\mathfrak{F}$. Assume that $I_{1}$ contains a regular element.
Then for all $n$ and $0\leq i \leq n-1$, we have
$$\lambda (I_{n}/(\mathfrak{m}I_{n}+ a^{n-i}I_{i}))= \mu(I_{n})-\mu(I_{i}) + \lambda ( (a^{n-i}I_{i} \cap \mathfrak{m}I_{n})/a^{n-i}\mathfrak{m}I_{i} ).$$
Particularly,
$$\lambda (I_{n}/(\mathfrak{m}I_{n}+ aI_{n-1}))= \mu(I_{n})-\mu(I_{n-1}) + \lambda ( (aI_{n-1} \cap \mathfrak{m}I_{n})/a\mathfrak{m}I_{n-1} ).$$

\end{lem}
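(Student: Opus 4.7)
The plan is to compute the length in question by threading through a chain of short exact sequences and one isomorphism, reducing it to $\mu(I_n)$, $\mu(I_i)$, and the ``defect'' length $\lambda((a^{n-i}I_i \cap \mathfrak{m}I_n)/a^{n-i}\mathfrak{m}I_i)$. The only non-formal ingredient is that $a$ is a regular element of $A$: this holds since $I_1$ contains a regular element (so $\text{grade}\, I_1 \geq 1$) and we may take the generator of our minimal reduction to be superficial (as in the Notation above), hence regular by the criterion recorded in Section~2.

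First, from the short exact sequence
$$0 \longrightarrow \frac{\mathfrak{m}I_n + a^{n-i}I_i}{\mathfrak{m}I_n} \longrightarrow \frac{I_n}{\mathfrak{m}I_n} \longrightarrow \frac{I_n}{\mathfrak{m}I_n + a^{n-i}I_i} \longrightarrow 0$$
I obtain
$$\lambda\left(\frac{I_n}{\mathfrak{m}I_n + a^{n-i}I_i}\right) = \mu(I_n) - \lambda\left(\frac{\mathfrak{m}I_n + a^{n-i}I_i}{\mathfrak{m}I_n}\right),$$
and the second isomorphism theorem rewrites the last term as $\lambda(a^{n-i}I_i / (a^{n-i}I_i \cap \mathfrak{m}I_n))$. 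Since $a^{n-i}I_i \subseteq I_n$, the submodule $a^{n-i}\mathfrak{m}I_i = \mathfrak{m}\cdot a^{n-i}I_i$ sits inside $a^{n-i}I_i \cap \mathfrak{m}I_n$, so the chain $a^{n-i}\mathfrak{m}I_i \subseteq a^{n-i}I_i \cap \mathfrak{m}I_n \subseteq a^{n-i}I_i$ yields
$$\lambda\left(\frac{a^{n-i}I_i}{a^{n-i}I_i \cap \mathfrak{m}I_n}\right) = \lambda\left(\frac{a^{n-i}I_i}{a^{n-i}\mathfrak{m}I_i}\right) - \lambda\left(\frac{a^{n-i}I_i \cap \mathfrak{m}I_n}{a^{n-i}\mathfrak{m}I_i}\right).$$

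The crux—and the only step beyond purely formal manipulation—is to identify $\lambda(a^{n-i}I_i / a^{n-i}\mathfrak{m}I_i)$ with $\mu(I_i)$. For this I consider the surjective $A$-linear map
$$I_i/\mathfrak{m}I_i \longrightarrow a^{n-i}I_i / a^{n-i}\mathfrak{m}I_i,\qquad \overline{x}\longmapsto \overline{a^{n-i}x}.$$
Surjectivity is immediate from the definition of the target. For injectivity, suppose $a^{n-i}x \in a^{n-i}\mathfrak{m}I_i$ with $x \in I_i$; writing $a^{n-i}x = a^{n-i}y$ with $y \in \mathfrak{m}I_i$, the regularity of $a$ (and hence of $a^{n-i}$) forces $x = y \in \mathfrak{m}I_i$. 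Thus the map is an isomorphism, so $\lambda(a^{n-i}I_i / a^{n-i}\mathfrak{m}I_i) = \mu(I_i)$. Assembling the three identities above yields the displayed formula, and the ``Particularly'' assertion is simply the specialization $i = n-1$.
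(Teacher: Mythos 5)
Your computation is, step for step, the paper's own proof: the paper runs the exact sequence $0 \to a^{n-i}I_i/(a^{n-i}I_i\cap \mathfrak{m}I_n) \to I_n/\mathfrak{m}I_n \to I_n/(\mathfrak{m}I_n + a^{n-i}I_i) \to 0$ (your first sequence combined with the second isomorphism theorem), then the sequence $0\to (a^{n-i}I_i\cap\mathfrak{m}I_n)/a^{n-i}\mathfrak{m}I_i \to a^{n-i}I_i/a^{n-i}\mathfrak{m}I_i \to a^{n-i}I_i/(a^{n-i}I_i\cap\mathfrak{m}I_n)\to 0$ (your submodule chain), and the isomorphism $a^{n-i}I_i/a^{n-i}\mathfrak{m}I_i\cong I_i/\mathfrak{m}I_i$ coming from regularity of $a$, and finishes by additivity of $\lambda$. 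The only difference of substance in the body is that the paper treats $i=0$ separately, using analytic independence of $J$ in $\mathfrak{F}$ to get $(a^n)\cap\mathfrak{m}I_n=a^n\mathfrak{m}$, so that for $i=0$ the correction term is actually zero; your uniform treatment of all $0\le i\le n-1$ still proves the formula exactly as stated, so this is harmless.

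The one step that does not hold up as written is your justification that $a$ is regular. The lemma fixes an arbitrary minimal reduction $J=(a)$, and it is later invoked (in Lemma~\ref{zteo15}, whose conditions \textbf{(2)}--\textbf{(5)} quantify over every minimal reduction), so you cannot ``take the generator of our minimal reduction to be superficial'': replacing $a$ by a superficial element proves the identity for that element, not for the given one. Moreover the appeal is essentially circular, since verifying that a given reduction generator is superficial requires passing from $ya\in aI_n=I_{n+1}$ to $y\in I_n$, i.e., already knowing that $a$ is a nonzerodivisor. The fact you need is true and has a short direct proof, which is presumably what the paper's one-line assertion ``the assumption on $I_1$ gives that $a$ is regular'' compresses: since $(a)$ is a reduction of $\mathfrak{F}$, there is $n_0$ with $I_N=a^{N-n_0}I_{n_0}$ for all $N\ge n_0$; if $ua=0$, then $uI_N=(ua)a^{N-n_0-1}I_{n_0}=0$ for $N>n_0$, and since $I_N$ contains $x^N$ for a regular element $x\in I_1$, this forces $u=0$. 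With that substitution your argument is complete and coincides with the paper's.
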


\begin{proof}
The assumption on $I_{1}$ gives that $a$ is regular.
The exact sequence
$$
\begin{array}{ccccccccc}
    0 & \rightarrow & (a^{n})/(\mathfrak{m}I_{n}\cap (a^{n})) & \rightarrow & I_{n}/\mathfrak{m}I_{n}  & \rightarrow & I_{n}/(\mathfrak{m}I_{n}+(a^{n}))  & \rightarrow & 0
  \end{array}
$$
and the equality $\mathfrak{m}I_{n}\cap (a^{n})=a^{n}\mathfrak{m}$ ($J$ is analytically independent in $\mathfrak{F}$) give $\lambda (I_{n}/(\mathfrak{m}I_{n}+(a^{n})))=\mu(I_{n})-\mu(J^{n})=\mu(I_{n})-1$.
For $1\leq i \leq n-1$, we have the following exact sequences
$$
\begin{array}{ccccccccc}
    0 & \rightarrow & a^{n-i}I_{i}/(a^{n-i}I_{i}\cap \mathfrak{m}I_{n}) & \rightarrow & I_{n}/\mathfrak{m}I_{n}  & \rightarrow & I_{n}/(\mathfrak{m}I_{n}+a^{n-i}I_{i})  & \rightarrow & 0,
  \end{array}
$$
$$
\begin{array}{lllllllll}
    0\hspace{-0.3cm} & \rightarrow & \hspace{-0.3cm} (a^{n-i}I_{i}\cap \mathfrak{m}I_{n})/a^{n-i}\mathfrak{m}I_{i}  \hspace{-0.3cm}& \rightarrow \hspace{-0.3cm}& a^{n-i}I_{i}/ a^{n-i}\mathfrak{m}I_{i} \hspace{-0.3cm} & \rightarrow & \hspace{-0.3cm} a^{n-i}I_{i}/(a^{n-i}I_{i}  \cap \mathfrak{m}I_{n})  &\hspace{-0.3cm} \rightarrow & \hspace{-0.3cm} 0.
  \end{array}
$$
We also have the isomorphism
$$a^{n-i}I_{i}/a^{n-i}\mathfrak{m}I_{i} \cong  I_{i}/\mathfrak{m}I_{i} ,$$
since $a$ is a regular element. The result follows due to the additivity of $\lambda$.
\end{proof}

\begin{nota}

If $a_{1},...,a_{k}\in A$ and $\mathfrak{F}$ is a good filtration, we denote by $I_{n}^{(k)}=(I_{n}+(a_{1},...,a_{k}))/(a_{1},...,a_{k})$
the $n$-th term of the filtration $\mathfrak{F}/(a_{1},...,a_{k})$ and the quotient ring $A/(a_{1},...,a_{k-1})$ by $A_{k-1}$ .

\end{nota}

\begin{lem}\label{lemz31}
If $a_{1},...,a_{k}\in I_{1}$ are such that $a_{1}^{*},...,a_{k}^{*}\in G(\mathfrak{F})$ and $a_{1}^{o},...,a_{k}^{o}\in F(\mathfrak{F})$ are regular sequence, then
$$
\mu(I_{n}^{(k)})=\sum_{i=0}^{k}(-1)^{i}\binom{k}{i}\mu(I_{n-i}).
$$
\end{lem}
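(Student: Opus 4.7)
The approach is induction on $k$. The key ingredient is the identification
$$F(\mathfrak{F})/(a_1^o,\ldots,a_j^o) \cong F(\mathfrak{F}/(a_1,\ldots,a_j))$$
of graded rings, the same one invoked in Lemma \ref{lemgor} via \cite[Lemma 4.5]{LP}, together with the regular-sequence hypothesis, which ensures that at each stage the image of the next $a_{j+1}^o$ is a degree-one nonzerodivisor on the fiber cone of the partially quotiented filtration.

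For the base case $k=1$, regularity of $a_1^o$ on $F(\mathfrak{F})$ yields the short exact sequence of graded $F(\mathfrak{F})$-modules
$$0 \to F(\mathfrak{F})(-1) \xrightarrow{\cdot\, a_1^o} F(\mathfrak{F}) \to F(\mathfrak{F})/(a_1^o) \to 0.$$
Taking the degree-$n$ piece, using the identification above on the rightmost term, and computing $A/\mathfrak{m}$-dimensions gives $\mu(I_n^{(1)}) = \mu(I_n) - \mu(I_{n-1})$, which matches $\sum_{i=0}^{1}(-1)^i\binom{1}{i}\mu(I_{n-i})$.

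For the inductive step, assume the formula holds for $k-1$. Applying the base case to the filtration $\mathfrak{F}/(a_1,\ldots,a_{k-1})$ together with the image of $a_k$—which is regular on the corresponding fiber cone by the hypothesis transported through the identification—produces
$$\mu(I_n^{(k)}) = \mu(I_n^{(k-1)}) - \mu(I_{n-1}^{(k-1)}).$$
Substituting the inductive hypothesis into each term, reindexing the second sum by $j=i+1$, and applying Pascal's identity $\binom{k-1}{i}+\binom{k-1}{i-1}=\binom{k}{i}$ (with the endpoints $i=0$ and $i=k$ absorbed by $\binom{k-1}{0}=\binom{k}{0}$ and $\binom{k-1}{k-1}=\binom{k}{k}$) assembles the two sums into $\sum_{i=0}^{k}(-1)^i\binom{k}{i}\mu(I_{n-i})$.

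The only nontrivial point is the identification of fiber cones used to translate the regular-sequence hypothesis in $F(\mathfrak{F})$ into a statement about fiber cones of successive quotient filtrations; once that is granted from \cite[Lemma 4.5]{LP} as in Lemma \ref{lemgor}, the remainder is a routine induction with binomial combinatorics.
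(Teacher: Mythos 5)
Your proof is correct and follows essentially the same route as the paper's: induction on $k$, a base case giving $\mu(I_{n}^{(1)})=\mu(I_{n})-\mu(I_{n-1})$, the recursion $\mu(I_{n}^{(k)})=\mu(I_{n}^{(k-1)})-\mu(I_{n-1}^{(k-1)})$ obtained by applying that base case to the quotient filtration, and Pascal's identity. The only difference is cosmetic: the paper proves the base case at the ideal level, using $(a_{1})\cap I_{n}=a_{1}I_{n-1}$, the intersection formula $a_{1}I_{n-1}\cap \mathfrak{m}I_{n}=a_{1}\mathfrak{m}I_{n-1}$, and the exact sequence $0 \to a_{1}I_{n-1}/(a_{1}I_{n-1}\cap \mathfrak{m}I_{n}) \to I_{n}/\mathfrak{m}I_{n} \to I_{n}/(\mathfrak{m}I_{n}+a_{1}I_{n-1}) \to 0$, which is exactly the degree-$n$ component of your graded sequence $0 \to F(\mathfrak{F})(-1) \xrightarrow{a_{1}^{o}} F(\mathfrak{F}) \to F(\mathfrak{F})/(a_{1}^{o}) \to 0$ combined with the identification $F(\mathfrak{F})/(a_{1}^{o})\cong F(\mathfrak{F}/(a_{1}))$.
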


\begin{proof}
The proof is done by using induction on $k$. It is easy to obtain $(a_{1})\cap I_{n}=a_{1}I_{n-1}$ for all $n \geq 0$. If $k=1$, by the isomorphism theorem,
$$
\mu(I_{n}^{(1)})=\lambda((I_{n}+(a_{1}))/(\mathfrak{m}I_{n}+(a_{1})) =\lambda(I_{n}/(\mathfrak{m}I_{n}+a_{1}I_{n-1})).
$$
Let consider the following natural exact sequence
$$
0 \rightarrow a_{1}I_{n-1}/(a_{1}I_{n-1}\cap \mathfrak{m}I_{n}) \rightarrow I_{n}/\mathfrak{m}I_{n} \rightarrow I_{n}/(\mathfrak{m}I_{n}+a_{1}I_{n-1}) \rightarrow 0.
$$
From \cite[Proposition 4.4]{C}, we have $a_{1}I_{n-1}\cap \mathfrak{m}I_{n}=a_{1}\mathfrak{m}I_{n-1}$. Moreover by \cite[Proposition 3.5]{HM}, $a_{1}$ is a regular element. Then $a_{1}I_{n-1}/a_{1}\mathfrak{m}I_{n-1}\cong I_{n-1}/\mathfrak{m}I_{n-1}$. Thus by exact sequence above, $\mu(I_{n}^{(1)})=\mu(I_{n})-\mu(I_{n-1})$.

Now suppose $k>1$. Hence $\overline{a}_{k}\in A_{k-1}$, $(\overline{a}_{k})^{*}\in G(\mathfrak{F}/(a_{1},...,a_{k-1}))$ and $(\overline{a}_{k})^{o}\in F(\mathfrak{F}/(a_{1},...,a_{k-1}))$ are regular elements. We know that the equality $\mu(I_{n}^{(k)})=\mu(I_{n}^{(k-1)})-\mu(I_{n-1}^{(k-1)})$ is true for $k=1$. Then, by induction on $k$,
$$
\begin{array}{lll}
  \mu(I_{n}^{(k)}) & = & \sum_{i=0}^{k-1}(-1)^{i}\binom{k-1}{i}\mu(I_{n-i}) - \sum_{i=0}^{k-1}(-1)^{i}\binom{k-1}{i}\mu(I_{n-1-i}) \vspace{0.3cm} \\
   & = &  \sum_{i=0}^{k}(-1)^{i} \left( \binom{k-1}{i} + \binom{k-1}{i-1}\right) \mu(I_{n-i}) \vspace{0.3cm} \\
   & = & \sum_{i=0}^{k}(-1)^{i}\binom{k}{i}\mu(I_{n-i})
\end{array}
$$
\end{proof}

The next lemma gives several characterizations of the Cohen-Macaulay property of the fiber cone of dimension one to good filtration.

\begin{lem}\label{zteo15}
Let $(A, \mathfrak{m})$ be a Noetherian Local ring and let $\mathfrak{F}$ a good filtration such that $I_{1}$ contains a regular element, $\dim F(\mathfrak{F})=1$ and $r$ is its reduction number. Then we have the following equivalent conditions:
\begin{description}
  \item[(1)] $F(\mathfrak{F})$ is Cohen-Macaulay;

  \item[(2)] $(0:_{F(\mathfrak{F})} a^{o})=0$ for any $J=(a) \subseteq I_{1}$ minimal reduction of $\mathfrak{F}$;

  \item[(3)] For any minimal reduction $(a)$ of $\mathfrak{F}$, $I_{n}\cap (\mathfrak{m}I_{n+1}: a)=\mathfrak{m}I_{n},$ $1\leq n \leq r-1;$

  \item[(4)] For any minimal reduction $(a)$ of $\mathfrak{F}$, $aI_{n}\cap \mathfrak{m}I_{n+1}=a\mathfrak{m}I_{n},$ $1\leq n \leq r-1;$

  \item[(5)] For any minimal reduction $(a)$ of $\mathfrak{F}$, $\lambda (I_{n}/(\mathfrak{m}I_{n}+aI_{n-1}) )=\mu(I_{n})-\mu(I_{n-1})$, $1\leq n \leq r.$
\end{description}

\end{lem}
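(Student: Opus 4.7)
The plan is to prove the cycle of equivalences $(1)\Leftrightarrow(2)\Leftrightarrow(3)\Leftrightarrow(4)\Leftrightarrow(5)$, each step being a short translation. Throughout, I will use that the generator $a$ of the minimal reduction is a non-zerodivisor of $A$: by the choice made in the Notation preceding the lemma, $a^{*}$ is superficial, and the preliminaries show that superficial elements are regular on $A$ whenever $I_{1}$ contains a regular element (so the hypothesis on $I_1$ feeds directly into every step).

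For $(1)\Leftrightarrow(2)$, since $\dim F(\mathfrak{F})=1$ and $a^{o}\in F(\mathfrak{F})_{1}$ is a parameter (because $(a)$ is a minimal reduction, so $a^{o}$ generates an $\mathfrak{m}$-primary ideal in $F(\mathfrak{F})$), Cohen-Macaulayness of $F(\mathfrak{F})$ is equivalent to $a^{o}$ being a non-zerodivisor, i.e., to $(0:_{F(\mathfrak{F})}a^{o})=0$. For $(2)\Leftrightarrow(3)$, I would identify the $n$-th graded piece of $(0:_{F(\mathfrak{F})}a^{o})$ as $(I_{n}\cap(\mathfrak{m}I_{n+1}:a))/\mathfrak{m}I_{n}$, so vanishing in each degree is exactly the equality in (3). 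To cut the range down to $1\le n\le r-1$, note that for $n\ge r$ the identity $I_{n+1}=aI_{n}$ together with regularity of $a$ gives $(\mathfrak{m}I_{n+1}:a)=\mathfrak{m}I_{n}$, while for $n=0$ the equality $\mathfrak{m}\cap(\mathfrak{m}I_{1}:a)=\mathfrak{m}$ follows from analytic independence of $(a)$. The equivalence $(3)\Leftrightarrow(4)$ is then immediate: multiplication by the regular element $a$ yields an $A$-linear isomorphism from $I_{n}\cap(\mathfrak{m}I_{n+1}:a)$ onto $aI_{n}\cap\mathfrak{m}I_{n+1}$ that sends $\mathfrak{m}I_{n}$ onto $a\mathfrak{m}I_{n}$, so one equality holds iff the other does.

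For $(4)\Leftrightarrow(5)$, I would apply Lemma \ref{lemz} with $i=n-1$ to rewrite
$$
\lambda\bigl(I_{n}/(\mathfrak{m}I_{n}+aI_{n-1})\bigr) - \bigl(\mu(I_{n})-\mu(I_{n-1})\bigr) = \lambda\bigl((aI_{n-1}\cap\mathfrak{m}I_{n})/a\mathfrak{m}I_{n-1}\bigr),
$$
so (5) at level $n$ is equivalent to (4) at level $n-1$. Since (5) at $n=1$ is automatic (the right-hand side above vanishes by analytic independence of $(a)$ in $\mathfrak{F}$), the range $1\le n\le r$ in (5) translates precisely to $1\le n-1\le r-1$ in (4). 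The main obstacle I anticipate is not conceptual but the index bookkeeping: one must verify that each boundary case ($n=0$ in (3), $n\ge r$ in (3)--(4), and $n=1$ in (5)) is genuinely automatic from the standing hypotheses, so that the ranges listed in (3), (4), (5) align under the equivalences without smuggling in extra assumptions.
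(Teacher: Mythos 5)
Your proposal is correct and follows essentially the same route as the paper's own proof, which likewise settles $(1)\Leftrightarrow(2)\Leftrightarrow(3)\Leftrightarrow(4)$ using the regularity of $a$ and obtains $(4)\Leftrightarrow(5)$ from Lemma \ref{lemz}; you have simply made explicit the degree-by-degree identification of $(0:_{F(\mathfrak{F})}a^{o})$ and the boundary cases that the paper leaves implicit. One small repair: regularity of $a$ for an \emph{arbitrary} minimal reduction $(a)$ should be deduced directly from the hypothesis on $I_{1}$ (for $n\gg 0$ the ideal $aI_{n}=I_{n+1}$ contains a regular element, so $ua=0$ forces $u=0$), rather than from superficiality of the particular reduction fixed in the Notation, since conditions (2)--(5) quantify over all minimal reductions.
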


\begin{proof}
The sentences \textbf{(1)} $\Leftrightarrow$ \textbf{(2)}, \textbf{(2)} $\Leftrightarrow$ \textbf{(3)} and \textbf{(3)} $\Leftrightarrow$ \textbf{(4)} follow by using the fact that $a$ is regular in $A$, while \textbf{(4)} $\Leftrightarrow$ \textbf{(5)} is provided by Lemma \ref{lemz}.

\end{proof}

In \cite{CZ3}, Cortadellas and Zarzuela show under some assumptions that the regularity of the fiber cone is equal to reduction number of a minimal reduction $J$ of $I$. We verify the same result for equimultiple good filtration.

\begin{prop}\label{regf}
Let $(A, \mathfrak{m})$ be a Noetherian local ring and let $\mathfrak{F}$ an equimultiple good filtration. Denote $\ell=\dim F(\mathfrak{F})$ and assume $\text{grade} \hspace{0.05cm} I_{1}=\ell$, $\text{grade} \hspace{0.05cm} G(\mathfrak{F})_{+}\geq \ell-1$ and $\text{depth} \hspace{0.01cm} F(\mathfrak{F}) \geq \ell-1$. Let $J\subseteq I_{1}$ be a minimal reduction of $\mathfrak{F}$ and denote $r=r_{J}(\mathfrak{F})$, the reduction number of $\mathfrak{F}$ with respect to $J$. Then we have the following:
\begin{description}
  \item[(1)] $\emph{reg} \ F(\mathfrak{F})=r;$

  \item[(2)] $F(\mathfrak{F})$ is a Cohen-Macaulay ring if and only if
  $$\lambda(I_{n}/(\mathfrak{m}I_{n}+JI_{n-1}))=\sum_{i=0}^{\ell}(-1)^{i}\binom{\ell}{i} \mu(I_{n-i}),$$ for $1\leq n \leq r(\mathfrak{F})$.

\end{description}

\end{prop}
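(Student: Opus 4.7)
Following the construction made just before Lemma~\ref{lemgor} (but with $\ell=s(\mathfrak{F})$ generators rather than $d$, which is legitimate since $\ell$ is the minimal number of generators of any minimal reduction of $\mathfrak{F}$), I would pick a minimal reduction $J=(x_{1},\ldots,x_{\ell})$ of $\mathfrak{F}$ such that $x_{1}^{*},\ldots,x_{\ell}^{*}\in G(\mathfrak{F})$ and $x_{1}^{o},\ldots,x_{\ell}^{o}\in F(\mathfrak{F})$ are superficial, hence filter-regular, sequences of degree one. The hypotheses $\mathrm{grade}\,G(\mathfrak{F})_{+}\geq \ell-1$ and $\mathrm{depth}\,F(\mathfrak{F})\geq \ell-1$ ensure that the truncations $x_{1}^{*},\ldots,x_{\ell-1}^{*}$ and $x_{1}^{o},\ldots,x_{\ell-1}^{o}$ are genuine regular sequences on $G(\mathfrak{F})$ and $F(\mathfrak{F})$, respectively.

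For part (1), I would invoke the filter-regular regularity formula used in \cite{CZ3}: since $x_{1}^{o},\ldots,x_{\ell}^{o}$ is a filter-regular sequence of length $\ell=\dim F(\mathfrak{F})$ in $F(\mathfrak{F})_{1}$, one has
$$\mathrm{reg}\, F(\mathfrak{F})=\mathrm{reg}\bigl(F(\mathfrak{F})/J^{o}F(\mathfrak{F})\bigr),$$
and the quotient on the right has dimension zero, so its regularity equals its top non-vanishing degree. A direct computation gives $(F(\mathfrak{F})/J^{o}F(\mathfrak{F}))_{n}\cong I_{n}/(\mathfrak{m}I_{n}+JI_{n-1})$, which by Nakayama vanishes if and only if $I_{n}=JI_{n-1}$. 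By definition of $r=r_{J}(\mathfrak{F})$ the largest $n$ for which this fails is $n=r$, yielding $\mathrm{reg}\,F(\mathfrak{F})=r$.

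For part (2), I would reduce to the one-dimensional setting of Lemma~\ref{zteo15}. The regular sequences produced in the setup give, via the identification already used in the proof of Lemma~\ref{lemgor}, iterated isomorphisms $F(\mathfrak{F})/(x_{1}^{o},\ldots,x_{\ell-1}^{o})F(\mathfrak{F})\cong F(\mathfrak{F}')$ and $G(\mathfrak{F})/(x_{1}^{*},\ldots,x_{\ell-1}^{*})G(\mathfrak{F})\cong G(\mathfrak{F}')$, where $\mathfrak{F}'=\mathfrak{F}/(x_{1},\ldots,x_{\ell-1})$. Hence $F(\mathfrak{F})$ is Cohen-Macaulay if and only if $F(\mathfrak{F}')$ is, and $\mathfrak{F}'$ is a good filtration with $\dim F(\mathfrak{F}')=1$, minimal reduction $(\bar{x}_{\ell})$, and reduction number $r$. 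By the equivalence (1)$\Leftrightarrow$(5) of Lemma~\ref{zteo15} applied to $\mathfrak{F}'$, this is in turn equivalent, for $1\leq n\leq r$, to
$$\lambda\bigl(I'_{n}/(\mathfrak{m}I'_{n}+\bar{x}_{\ell}I'_{n-1})\bigr)=\mu(I'_{n})-\mu(I'_{n-1}),$$
where $I'_{n}=I_{n}^{(\ell-1)}$. Lemma~\ref{lemz31} gives $\mu(I'_{n})=\sum_{i=0}^{\ell-1}(-1)^{i}\binom{\ell-1}{i}\mu(I_{n-i})$, and Pascal's identity converts the right-hand side of the displayed equation into $\sum_{i=0}^{\ell}(-1)^{i}\binom{\ell}{i}\mu(I_{n-i})$. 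A further iteration of the quotient isomorphism yields $F(\mathfrak{F}')/\bar{x}_{\ell}F(\mathfrak{F}')\cong F(\mathfrak{F})/J^{o}F(\mathfrak{F})$, and taking the degree-$n$ component identifies the left-hand side with $\lambda(I_{n}/(\mathfrak{m}I_{n}+JI_{n-1}))$, which is exactly the stated criterion.

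The main obstacle is the regularity formula invoked in part (1): since $F(\mathfrak{F})$ is not assumed Cohen-Macaulay (only of depth $\geq \ell-1$), one must appeal to the standard filter-regular machinery for graded local cohomology to obtain $\mathrm{reg}\,F(\mathfrak{F})=\mathrm{reg}(F(\mathfrak{F})/J^{o}F(\mathfrak{F}))$, and verifying its hypotheses from the filter-regularity of $x_{1}^{o},\ldots,x_{\ell}^{o}$ and the depth condition is the step I expect will require the most bookkeeping. A secondary subtlety is checking that the reduction number of $\mathfrak{F}'$ coincides with that of $\mathfrak{F}$, which follows from the choice of $x_{1},\ldots,x_{\ell-1}$ as a superficial sequence whose images are regular on both $G(\mathfrak{F})$ and $F(\mathfrak{F})$.
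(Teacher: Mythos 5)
Your treatment of part (2) is essentially the paper's own argument: reduce modulo the regular sequence $x_{1}^{o},\ldots,x_{\ell-1}^{o}$ to the one-dimensional fiber cone $F(\mathfrak{F}/J_{\ell-1})$, apply the equivalence \textbf{(1)}$\Leftrightarrow$\textbf{(5)} of Lemma \ref{zteo15}, and convert the right-hand side via Lemma \ref{lemz31} and Pascal's identity. No issues there.

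Part (1), however, has a genuine gap, and it sits exactly at the step you flag as ``bookkeeping.'' You claim that, because $x_{1}^{o},\ldots,x_{\ell}^{o}$ is a filter-regular sequence of length $\ell=\dim F(\mathfrak{F})$, one has $\mathrm{reg}\,F(\mathfrak{F})=\mathrm{reg}\bigl(F(\mathfrak{F})/J^{o}F(\mathfrak{F})\bigr)$. That is not what filter-regular machinery gives. Quotienting by a genuinely regular linear form does preserve regularity, so the first $\ell-1$ steps are fine and yield $\mathrm{reg}\,F(\mathfrak{F})=\mathrm{reg}\,F'$ with $F'=F(\mathfrak{F}/J_{\ell-1})$ one-dimensional. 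But the last element is only filter-regular on $F'$ (whose depth may be $0$), and for a filter-regular linear form $z$ on a graded module $M$ the correct statements are $\mathrm{reg}(M/zM)\leq \mathrm{reg}(M)$ and $\mathrm{reg}(M)=\max\{a_{0}(M),\,\mathrm{reg}(M/zM)\}$. So the standard theory only yields $r=\mathrm{reg}(F'/zF')\leq \mathrm{reg}\,F(\mathfrak{F})$; the substantive inequality $\mathrm{reg}\,F(\mathfrak{F})\leq r$ amounts to $a_{0}(F')\leq r$, i.e.\ control of $H^{0}_{F'_{+}}(F')$, and this is not formal. Indeed it fails for general standard graded rings: take $S=k[x,y]/(x^{2},xy^{n})$ with $n\geq 2$; then $z=y$ is filter-regular, $(y)$ is a minimal reduction of $S_{+}$ with reduction number $1=\mathrm{reg}(S/yS)$, yet $H^{0}_{S_{+}}(S)=(x)S$ is nonzero in degree $n$, so $\mathrm{reg}\,S\geq n>1$. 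Hence any proof of your claimed equality must use structure specific to fiber cones of good filtrations (the hypotheses $\mathrm{grade}\,I_{1}=\ell$, equimultiplicity, etc.), which your outline never invokes.

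The paper circumvents this precisely by stopping at dimension one: after the reduction $\mathrm{reg}\,F(\mathfrak{F})=\mathrm{reg}\,F(\mathfrak{F}/J_{\ell-1})$ it cites the one-dimensional result \cite[Lemma 4.2]{LP} (together with Lemma \ref{zteo15}) to conclude $\mathrm{reg}\,F(\mathfrak{F}/J_{\ell-1})=r$. That one-dimensional statement is where the real content of part (1) lives, and it is exactly what your proposal leaves unproved.
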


\begin{proof}
If $\ell=1$ the results follow by \cite[Lemma 4.2]{LP} and Lemma \ref{zteo15}. Suppose $\ell > 1$. Since $\text{grade} \ I_{1}=\ell$ we may find a minimal reduction $J=(a_{1},...,a_{\ell})$ of $\mathfrak{F}$ such that $a_{1},...,a_{\ell}\in A$ a regular sequence, $a_{1}^{*},...,a_{l-1}^{*}\in G(\mathfrak{F})$ is a regular sequence and $a_{1}^{o},...,a_{l-1}^{o}\in F(\mathfrak{F})$ is a regular sequence. As $I_{1}/(a_{1},...,a_{\ell-1})\subset A/(a_{1},...,a_{\ell-1})$ contains a regular element and besides $\dim F(\mathfrak{F}/(a_{1},...,a_{\ell-1}))=1$ (see \cite[Proposition 2.5]{JV}) and $r_{J}(\mathfrak{F})=r_{J/J_{\ell-1}}(\mathfrak{F}/J_{\ell-1})$, we have that
$$\text{reg} \ F(\mathfrak{F})= \text{reg} \ F(\mathfrak{F})/(a_{1}^{o},...,a_{l-1}^{o}) = \text{reg} \ F(\mathfrak{F}/J_{\ell-1})=r.$$ We have done \text{\textbf{(1)}}.

\text{\textbf{(2)}} By \cite[Theorem 2.1.3]{BH} and the fact that
$$F(\mathfrak{F}/(a_{1},...,a_{\ell-1}))\cong F(\mathfrak{F})/(a_{1}^{o},...,a_{l-1}^{o}),$$
we have that
$F(\mathfrak{F})$ is Cohen-Macaulay if, and only if $F(\mathfrak{F}/(a_{1},...,a_{\ell-1}))$ is Cohen-Macaulay. Let denote by $\overline{I}_{n}=(I_{n}+J_{\ell-1})/J_{\ell-1}$ the $n$-th term of $\mathfrak{F}/(a_{1},...,a_{\ell-1})$ and $\mathfrak{m}_{\ell-1}=\mathfrak{m}/J_{\ell-1}\subset A/J_{\ell-1}$. By Theorem \ref{zteo15}, the argument above is equivalent to
$$
\lambda(\overline{I}_{n}/\mathfrak{m}_{\ell-1}\overline{I}_{n}+a_{\ell}\overline{I}_{n-1})=\mu(\overline{I}_{n-1})-\mu(\overline{I}_{n-1}),
$$
for $0\leq n \leq r$.  By the isomorphism theorem and by \cite[Proposition 3.5]{HM} we have
$$
\begin{array}{ccl}
  \overline{I}_{n}/\mathfrak{m}_{\ell-1}\overline{I}_{n}+a_{\ell}\overline{I}_{n-1} & \cong & I_{n}/(\mathfrak{m}I_{n}+a_{l}I_{n-1}+I_{n}\cap (a_{1},...,a_{\ell-1})) \vspace{0.3cm} \\

   & \cong &  I_{n}/(\mathfrak{m}I_{n}+(a_{1},...,a_{\ell})I_{n-1}).
\end{array}
$$
By the Lemma \ref{lemz31}, $\mu(\overline{I}_{n-1})-\mu(\overline{I}_{n-1})=\sum_{i=0}^{\ell}(-1)^{i}\binom{\ell}{i} \mu(I_{n-i})$. Then \textbf{(2)} follows.
\end{proof}

The Corollary below may be also view as a particular case of \cite[Theorem 2.3]{JN}.

\begin{cor}
Let $\mathfrak{F}$ be a Hilbert filtration. If $G(\mathfrak{F})$ is Cohen-Macaulay then $\emph{reg} \ G(\mathfrak{F}) \leq \emph{reg} \ F(\mathfrak{F})$. If  in addition $F(\mathfrak{F})$ is Cohen-Macaulay, then $\emph{reg} \ G(\mathfrak{F})=r=\emph{reg} \ F(\mathfrak{F})$.
\end{cor}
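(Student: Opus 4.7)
The strategy is to express both regularities in terms of the reduction number $r$ by combining Proposition \ref{prop3.1}(2) with graded local duality. Because $\mathfrak{F}$ is Hilbert one has $\dim G(\mathfrak{F}) = d$, and the Cohen-Macaulay hypothesis on $G(\mathfrak{F})$ forces $H^{i}_{\mathfrak{M}}(G(\mathfrak{F})) = 0$ for $i \neq d$. Hence the maximum defining regularity collapses: $\text{reg}\,G(\mathfrak{F}) = a_{d}(G(\mathfrak{F})) + d$. Since for a Cohen-Macaulay graded ring the cohomological $a_{d}$ coincides with the $a$-invariant read off from its canonical module, Proposition \ref{prop3.1}(2) yields $\text{reg}\,G(\mathfrak{F}) = (r-d)+d = r$.

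To lower-bound $\text{reg}\,F(\mathfrak{F})$ I would view $F(\mathfrak{F}) = G(\mathfrak{F})/\mathfrak{m}G(\mathfrak{F})$ as a graded $G(\mathfrak{F})$-module. Since $I_{n+1} \subseteq \mathfrak{m}I_{n}$ (the standing assumption of Section 3), the image of $G(\mathfrak{F})_{+}$ in $F(\mathfrak{F})$ is exactly $F(\mathfrak{F})_{+}$, so $H^{i}_{G_{+}}(F(\mathfrak{F})) = H^{i}_{F_{+}}(F(\mathfrak{F}))$ for every $i$. Graded local duality over the Cohen-Macaulay base $G(\mathfrak{F})$ then gives a degree-reversing isomorphism
$$
H^{d}_{F_{+}}(F(\mathfrak{F})) \;\cong\; {}^{*}\!\text{Hom}_{A}\!\bigl(\text{Hom}_{G(\mathfrak{F})}(F(\mathfrak{F}),\,\omega_{G(\mathfrak{F})}),\;E_{A}(A/\mathfrak{m})\bigr).
$$
By Proposition \ref{prop3.1}(1) the inner Hom is precisely $\omega_{F(\mathfrak{F})}$, whence $a_{d}(F(\mathfrak{F})) = -\min\{n \mid [\omega_{F(\mathfrak{F})}]_{n} \neq 0\} = a(F(\mathfrak{F}))$, which by Proposition \ref{prop3.1}(2) equals $r - d$. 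Consequently $\text{reg}\,F(\mathfrak{F}) \geq a_{d}(F(\mathfrak{F})) + d = r = \text{reg}\,G(\mathfrak{F})$, proving the first inequality.

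For the sharpened equality, assume in addition that $F(\mathfrak{F})$ is Cohen-Macaulay. Then $H^{i}_{F_{+}}(F(\mathfrak{F})) = 0$ for every $i < d$, so the maximum defining regularity collapses to $\text{reg}\,F(\mathfrak{F}) = a_{d}(F(\mathfrak{F})) + d = r$, matching $\text{reg}\,G(\mathfrak{F})$. The main technical point is the graded local duality step that identifies the cohomological $a_{d}(F(\mathfrak{F}))$ with the $a$-invariant read from $\omega_{F(\mathfrak{F})} = \text{Hom}_{G(\mathfrak{F})}(F(\mathfrak{F}),\omega_{G(\mathfrak{F})})$; once this bridge is in place, everything reduces to invoking Proposition \ref{prop3.1}(2) together with the definition of regularity.
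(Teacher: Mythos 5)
Your proof is correct, but it takes a genuinely different route from the paper's, which is essentially a pair of citations: the first inequality is quoted from \cite[Theorem 4.6]{LP}, and the second part combines \cite[Proposition 3.6]{HZ} (giving $\text{reg}\ G(\mathfrak{F})=r$) with Proposition \ref{regf} (giving $\text{reg}\ F(\mathfrak{F})=r$), the latter being proved in the paper by descending to analytic spread one along a superficial sequence and using the length characterizations of Lemma \ref{zteo15}. You instead argue cohomologically: graded local duality over the Cohen--Macaulay ring $G(\mathfrak{F})$ identifies $H^{d}_{F(\mathfrak{F})_{+}}(F(\mathfrak{F}))$ with the graded Matlis dual of $\text{Hom}_{G(\mathfrak{F})}(F(\mathfrak{F}),\omega_{G(\mathfrak{F})})$, so that $a_{d}(F(\mathfrak{F}))=r-d$; then $\text{reg}\ F(\mathfrak{F})\geq a_{d}(F(\mathfrak{F}))+d=r=\text{reg}\ G(\mathfrak{F})$ gives the inequality, and the Cohen--Macaulay vanishing of the lower local cohomologies of $F(\mathfrak{F})$ upgrades it to equality. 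Your route is self-contained (no appeal to \cite{LP}) and exposes the mechanism, namely that the top local cohomology of the fiber cone already reaches degree $r-d$; the paper's route is shorter and, via Proposition \ref{regf}, remains valid under depth hypotheses weaker than Cohen--Macaulayness of $F(\mathfrak{F})$. One caution: in the first statement $F(\mathfrak{F})$ is not assumed Cohen--Macaulay, whereas Proposition \ref{prop3.1} (and the very existence of $\omega_{F(\mathfrak{F})}$ in the paper's sense) sits under the standing assumption of Section 3 that it is; so you should not invoke Proposition \ref{prop3.1}(1)--(2) there as black boxes. Your duality step already contains the fix: leave the dual in the form $\text{Hom}_{G(\mathfrak{F})}(F(\mathfrak{F}),\omega_{G(\mathfrak{F})})\cong \oplus_{n}(0:_{\omega_{n}}\mathfrak{m})$ and read off its least nonvanishing degree from the essential-extension statement (\ref{extessencial}), which uses nothing about $F(\mathfrak{F})$; this yields $a_{d}(F(\mathfrak{F}))=a_{d}(G(\mathfrak{F}))=r-d$ assuming only that $G(\mathfrak{F})$ is Cohen--Macaulay, exactly as the first statement requires.
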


\begin{proof}
For the first part we use \cite[Theorem 4.6]{LP}. For the second part is enough observe \cite[Proposition 3.6]{HZ} for the first equality and the Proposition \ref{regf} for the second equality.
\end{proof}

If the rings $G(\mathfrak{F})$ and $F(\mathfrak{F})$ are Cohen-Macaulay, we see that even for Hilbert filtration, the regularities of $\omega_{G(\mathfrak{F})}$ and $\omega_{F(\mathfrak{F})}$ are equal. In addition, if they are also Gorenstein rings, their regularity are equal to the dimension of the base ring.

\begin{cor}
If the rings $G(\mathfrak{F})$ and $F(\mathfrak{F})$ are Cohen-Macaulay, one has $\emph{reg} \ \omega_{G(\mathfrak{F})}=\emph{reg} \ \omega_{F(\mathfrak{F})}$. Furthermore, if either $G(\mathfrak{F})$ or $F(\mathfrak{F})$ is Gorenstein, $\emph{reg} \ \omega_{G(\mathfrak{F})}=\emph{reg} \ \omega_{F(\mathfrak{F})}=d$.
\end{cor}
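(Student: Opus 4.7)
The plan is to prove the stronger combined statement that $\text{reg}\,\omega_{G(\mathfrak{F})} = \text{reg}\,\omega_{F(\mathfrak{F})} = d$ whenever both $G(\mathfrak{F})$ and $F(\mathfrak{F})$ are Cohen-Macaulay; the Gorenstein refinement in the second sentence of the corollary then follows for free, since Gorenstein implies Cohen-Macaulay.

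By the dimension argument used in the proof of Proposition \ref{prop3.1}(1), both $G(\mathfrak{F})$ and $F(\mathfrak{F})$ are standard graded of Krull dimension $d$. Under the Cohen-Macaulay hypothesis their canonical modules are themselves Cohen-Macaulay of dimension $d$, so the local cohomology $H^i_{\mathfrak{M}}(\omega_S)$ vanishes for $i < d$, and the definition of Castelnuovo-Mumford regularity collapses to $\text{reg}\,\omega_S = a_d(\omega_S) + d$ for $S \in \{G(\mathfrak{F}), F(\mathfrak{F})\}$. The claim therefore reduces to verifying that $a_d(\omega_S) = 0$ in each of the two cases.

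For this I would invoke graded local duality (\cite[Theorem 3.6.19]{BH} or \cite[Corollary 36.13]{HIO}) together with the standard identification $\text{Hom}_S(\omega_S, \omega_S) \cong S$, valid for any Cohen-Macaulay graded ring admitting a canonical module. The duality then produces a graded isomorphism $H^d_{\mathfrak{M}}(\omega_S) \cong {}^{*}\text{Hom}_{S_0}(S, E_{S_0})$, whose degree-$n$ component is $\text{Hom}_{S_0}(S_{-n}, E_{S_0})$. Because $S$ is positively graded this component vanishes for $n > 0$, and because $S_0 \neq 0$ it is nonzero at $n = 0$. Hence $a_d(\omega_S) = a(H^d_{\mathfrak{M}}(\omega_S)) = 0$, yielding $\text{reg}\,\omega_S = d$. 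Applying the identity to both $S = G(\mathfrak{F})$ and $S = F(\mathfrak{F})$ gives the first equality, and the second (Gorenstein) assertion is subsumed.

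The one delicate point is justifying graded local duality when the base $S_0$ is not a field. For $F(\mathfrak{F})$ this is trivial, since $S_0 = A/\mathfrak{m}$ is a field; for $G(\mathfrak{F})$ the base $S_0 = A/I_{1}$ is Artinian local in the Hilbert-filtration framework implicit throughout this section, which is precisely the setting in which the cited form of duality is stated. In both cases the decisive consequence — that the top nonvanishing degree of $H^d_{\mathfrak{M}}(\omega_S)$ equals $0$ — depends only on $S$ being standard graded with nonzero zeroth component, so what might look like an obstacle is in fact only bookkeeping to match the hypotheses of the duality theorem.
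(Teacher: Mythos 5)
Your argument is correct in substance, and it takes a genuinely different route from the paper's --- in fact it proves a stronger statement. Your duality computation shows that for \emph{any} Cohen--Macaulay positively graded ring $S$ of dimension $d$ whose degree-zero part $S_{0}$ is Artinian local (both $F(\mathfrak{F})$, with $S_{0}=A/\mathfrak{m}$, and $G(\mathfrak{F})$, with $S_{0}=A/I_{1}$, qualify in the Hilbert-filtration setting), one has $\text{reg} \ \omega_{S}=d$: local duality plus ${}^{*}\text{Hom}_{S}(\omega_{S},\omega_{S})\cong S$ gives $H^{d}(\omega_{S})\cong {}^{*}\text{Hom}_{S_{0}}(S,E_{S_{0}})$, whose top degree is $0$, and Cohen--Macaulayness of $\omega_{S}$ kills the lower cohomology. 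So the Gorenstein hypothesis in the corollary's second sentence is superfluous, and both sentences follow at once. The paper argues quite differently: it picks a minimal reduction $J=(x_{1},\dots,x_{d})$ whose images form regular sequences in $G(\mathfrak{F})$ and $F(\mathfrak{F})$, uses \cite[Corollary 3.6.14]{BH} to write $\text{reg} \ \omega_{S}=\text{reg} \ \omega_{S/J}+d$ and thereby reduces to the zero-dimensional filtration $\mathfrak{F}/J$, where regularity is just the top degree $a(\omega)$; it then compares the top degrees of $\omega_{F(\mathfrak{F}/J)}$ and $\omega_{G(\mathfrak{F}/J)}$ via Proposition \ref{prop3.1}(1) and the essential-extension fact (\ref{extessencial}), which yields only the \emph{equality} of regularities; the value $d$ is extracted separately in the Gorenstein case from $\omega_{G(\mathfrak{F})}\cong G(\mathfrak{F})(r-d)$ and $\text{reg} \ G(\mathfrak{F})=r$ \cite[Proposition 3.6]{HZ}. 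What your approach buys is generality and brevity: no superficial sequences, no reduction number, and a sharper conclusion. What the paper's approach buys is that it stays inside the toolkit it has already built (Proposition \ref{prop3.1} and socle arguments) and never needs graded local duality over a non-field Artinian base, nor the identification $\text{Hom}(\omega,\omega)\cong S$.

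Three points you should make explicit to be fully rigorous. First, $G(\mathfrak{F})$ and $F(\mathfrak{F})$ need not be \emph{standard} graded for a Hilbert filtration (in general $I_{n}\neq I_{1}^{n}$), so delete that word; your argument only uses positive grading with $S_{0}\neq 0$, which is all that is true and all that is needed. Second, the paper defines regularity via local cohomology with respect to the irrelevant ideal $S_{+}$, whereas your duality is with respect to the ${}^{*}$maximal ideal $\mathfrak{M}=\mathfrak{m}_{0}\oplus S_{+}$; these agree here precisely because $\mathfrak{m}_{0}$ is nilpotent when $S_{0}$ is Artinian local, so $\sqrt{S_{+}}=\sqrt{\mathfrak{M}}$ --- this is the real content of your ``bookkeeping'' remark and deserves a sentence. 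Third, as you note, the corollary must be read with $\mathfrak{F}$ a Hilbert filtration (the paper's own proof invokes Proposition \ref{prop3.1}, which requires it); for a merely good filtration $A/I_{1}$ need not be Artinian and both proofs break down.
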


\begin{proof}
The case $d=0$ follows of some arguments in the proof for positive dimension. Assume $d>0$. By \cite[Proposition 2.2]{JV} and hypothesis, we get a minimal reduction $J$ of $\mathfrak{F}$ minimally generated by $x_{1},...,x_{d}$ such that $x_{1}^{*},...,x_{d}^{*}\in G(\mathfrak{F})$ and $x_{1}^{o},...,x_{d}^{o}\in F(\mathfrak{F})$ are regular sequences. Then
$$
F(\mathfrak{F})/(x_{1}^{o},...,x_{d}^{o}) \cong F(\mathfrak{F}/(x_{1},...,x_{d})) \  \text{and} \ G(\mathfrak{F})/(x_{1}^{*},...,x_{d}^{*}) \cong G(\mathfrak{F}/(x_{1},...,x_{d})).
$$
Then by \cite[Corollary 3.6.14]{BH},
$$
\text{reg} \ \omega_{F(\mathfrak{F})}= \text{reg}(\omega_{F(\mathfrak{F})}/J^{o}\omega_{F(\mathfrak{F})})= \text{reg}\omega_{F(\mathfrak{F}/J)}+d
$$
and the same for $G(\mathfrak{F})$, that is, $\text{reg} \hspace{0.1cm} \omega_{G(\mathfrak{F})}=\text{reg}\omega_{G(\mathfrak{F}/J)}+d$.

Since $\dim F(\mathfrak{F}/J)=0= \dim G(\mathfrak{F}/J)$,
$\text{reg} \hspace{0.05cm} \omega_{F(\mathfrak{F}/J)}=a(\omega_{F(\mathfrak{F}/J)})$ and $\text{reg} \hspace{0.05cm} \omega_{G(\mathfrak{F}/J)}=a(\omega_{G(\mathfrak{F}/J)})$.

By applying the Proposition \ref{prop3.1}\textbf{(1)} to $\mathfrak{F}/J$ we have
$$
\omega_{F(\mathfrak{F}/J)}\cong \oplus_{n \in \mathbb{Z}}(0:_{[\omega_{G(\mathfrak{F}/J)}]_{n}}\mathfrak{m}).
$$
Then by (\ref{extessencial}), we have $\omega_{[F(\mathfrak{F}/J)]_{n}} \neq 0 \Leftrightarrow \omega_{[G(\mathfrak{F}/J)]_{n}}\neq 0$ for all $n$. Hence $a(\omega_{[F(\mathfrak{F}/J)]_{n}})=a(\omega_{[G(\mathfrak{F}/J)]_{n}})$ and by the equalities above, $\text{reg} \ \omega_{G(\mathfrak{F})}= \text{reg} \ \omega_{F(\mathfrak{F})}$.

Furthermore if for example $G(\mathfrak{F})$ is Gorenstein, by \cite[Proposition 3.6.11]{BH}, $\omega_{G(\mathfrak{F})}\cong G(\mathfrak{F})(r-d)$. Then
$$
\text{reg} \ \omega_{G(\mathfrak{F})}= \text{reg} \ G(\mathfrak{F})(r-d)= \text{reg} \ G(\mathfrak{F})-r+d.
$$
Since $G(\mathfrak{F})$ is Cohen-Macaulay, by \cite[Proposition 3.6]{HZ}, $\text{reg} \ G(\mathfrak{F})=r$. Hence we got $\text{reg} \ \omega_{G(\mathfrak{F})}=d$. The result follows similarly if we suppose $F(\mathfrak{F})$ Gorenstein.

\end{proof}

In the Corollary below we see other descriptions (in terms of annihilators) of the canonical module of the Fiber cone and the canonical module of the associated graded ring.

\begin{cor}
Let $(A,\mathfrak{m})$ be a Gorenstein Noetherian local ring such that $d>0$ and $\mathfrak{F}$ a Hilbert filtration. Suppose that $G(\mathfrak{F})$ is Cohen-Macaulay. Then
$$\omega_{G(\mathfrak{F})} = \bigoplus_{n\in \mathbb{Z} }\frac{(J^{n+r-d}:I_{r})}{(J^{n+r-d+1}:I_{r})}$$ and $$\omega_{F(\mathfrak{F})} = \bigoplus_{n\in \mathbb{Z} }\frac{(J^{n+r-d+1}:\mathfrak{m}I_{r})\cap (J^{n+r-d}:I_{r})}{(J^{n+r-d+1}:I_{r})}.$$ Furthermore, $F(\mathfrak{F})$ is Gorenstein if and only if
$$
\frac{ (J^{n+1}:\mathfrak{m}I_{r})\cap (J^{n}: I_{r}) }{(J^{n+1}:I_{r}) }\cong \frac{I_{n}}{\mathfrak{m}I_{n}},
$$
for any $n\in \mathbb{Z}^{+}$.

\end{cor}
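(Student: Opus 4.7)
The plan is to prove the three assertions in order. First I establish the formula for $\omega_{G(\mathfrak{F})}$; then I use Proposition~\ref{prop3.1}(1) to derive the formula for $\omega_{F(\mathfrak{F})}$; finally I obtain the Gorenstein criterion by matching $\omega_{F(\mathfrak{F})}$ with $F(\mathfrak{F})(r-d)$ degree by degree.

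For the first formula, the key observation is that since $A$ is Gorenstein of dimension $d$ and $I_1$ is $\mathfrak{m}$-primary, any minimal reduction $J=(x_1,\dots,x_d)$ of $\mathfrak{F}$ is generated by a regular sequence. Therefore $G(J)\cong(A/J)[T_1,\dots,T_d]$ is Gorenstein graded with $\omega_{G(J)}\cong G(J)(-d)$, because $A/J$ is a zero-dimensional Gorenstein ring. Since $G(\mathfrak{F})$ is a finite maximal Cohen--Macaulay module over $G(J)$, we have $\omega_{G(\mathfrak{F})}\cong\operatorname{Hom}_{G(J)}\bigl(G(\mathfrak{F}),G(J)(-d)\bigr)$. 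I then define a map
$$\Phi_n\colon\frac{J^{n+r-d}:I_r}{J^{n+r-d+1}:I_r}\longrightarrow\operatorname{Hom}_{G(J)}\bigl(G(\mathfrak{F}),G(J)(-d)\bigr)_n$$
sending a class $\overline{\beta}$ to the degree-$n$ graded $G(J)$-homomorphism acting on $I_m/I_{m+1}$ as multiplication by $\beta$ modulo $J^{m+n-d+1}$. Well-definedness when $m\ge r$ follows from $I_m=J^{m-r}I_r$, which gives $\beta I_m\subseteq J^{m-r}J^{n+r-d}=J^{m+n-d}$; the kernel is precisely $J^{n+r-d+1}:I_r$ because $\overline{\beta}$ induces the zero map iff $\beta I_r\subseteq J^{n+r-d+1}$; and surjectivity follows because a $G(J)$-linear map out of $G(\mathfrak{F})$ of degree $n$ is determined by its action on the top generating component $G(\mathfrak{F})_r$, which must be multiplication by an appropriate lift in $A$ satisfying the colon condition.

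Once the formula for $\omega_{G(\mathfrak{F})}$ is established, Proposition~\ref{prop3.1}(1) yields $\omega_{F(\mathfrak{F})}\cong\bigoplus_n(0:_{\omega_n}\mathfrak{m})$ where $\omega_n=(J^{n+r-d}:I_r)/(J^{n+r-d+1}:I_r)$. A class $\overline{\beta}\in\omega_n$ is killed by $\mathfrak{m}$ iff $\mathfrak{m}\beta I_r\subseteq J^{n+r-d+1}$, i.e.\ $\beta\in J^{n+r-d+1}:\mathfrak{m}I_r$; intersecting with $J^{n+r-d}:I_r$ gives the stated expression for $\omega_{F(\mathfrak{F})}$. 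For the Gorenstein criterion, since $F(\mathfrak{F})$ is Cohen--Macaulay by hypothesis with $a(F(\mathfrak{F}))=r-d$ from Proposition~\ref{prop3.1}(2), Gorensteinness is equivalent to $\omega_{F(\mathfrak{F})}\cong F(\mathfrak{F})(r-d)$ as graded modules. Comparing the degree-$(n+d-r)$ pieces on both sides using the formula from Step~2 and $F(\mathfrak{F})_n=I_n/\mathfrak{m}I_n$ yields, after reindexing, exactly the stated isomorphism $\frac{(J^{n+1}:\mathfrak{m}I_r)\cap(J^n:I_r)}{J^{n+1}:I_r}\cong I_n/\mathfrak{m}I_n$ for each $n\in\mathbb{Z}^+$.

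The main technical obstacle lies in Step~1: verifying that the multiplication-by-$\beta$ map yields an isomorphism between the colon-quotient module and $\operatorname{Hom}_{G(J)}(G(\mathfrak{F}),G(J)(-d))$ in every graded degree, including the degrees $m<r$ where $G(\mathfrak{F})_m$ is not generated over $G(J)$ by $G(\mathfrak{F})_r$. The careful bookkeeping of the $G(J)$-module structure of $G(\mathfrak{F})$, together with the various shifts coming from $\omega_{G(J)}\cong G(J)(-d)$, is the heart of the argument. Once this is in place, Steps 2 and 3 are routine consequences of Proposition~\ref{prop3.1} and degree-wise comparison.
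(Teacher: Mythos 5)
Your Steps 2 and 3 coincide with the paper's route: once the colon-ideal description of $\omega_{G(\mathfrak{F})}$ is available, the paper also applies Proposition \ref{prop3.1}(1) to obtain $\omega_{F(\mathfrak{F})}$, and then compares graded pieces of $\omega_{F(\mathfrak{F})}$ with those of $F(\mathfrak{F})(r-d)$. The genuine gap is in your Step 1, and it is not ``careful bookkeeping'': you never prove surjectivity of $\Phi_n$, i.e.\ that every degree-$n$ graded $G(J)$-homomorphism $G(\mathfrak{F})\to G(J)(-d)$ is multiplication by some $\beta\in A$, and that claim is essentially the whole content of the formula. (Well-definedness in degrees $m<r$ can actually be rescued: since $J^{r-m}I_m\subseteq I_{r-m}I_m\subseteq I_r$, one gets $\beta I_m\subseteq (J^{n+r-d}:J^{r-m})=J^{n+m-d}$ by the colon identity $(J^a:J^b)=J^{a-b}$ valid for ideals generated by regular sequences; so that half is fine. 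Surjectivity is not.) The standard device for showing that abstract homomorphisms into a ring are given by ring-element multiplication is to identify $\operatorname{Hom}_B(C,B)$ with a fractional ideal $(B:_Q C)$ inside a common total quotient ring $Q$; this requires $B\to C$ to be an injection of rings sharing their total ring of fractions, and it is unavailable here because the natural map $G(J)\to G(\mathfrak{F})$ is in general not even injective. That is precisely why the paper does not compute over $G(J)$ at all: it quotes the Heinzer--Kim--Ulrich theorem \cite{HKU2}, which carries out the fractional-ideal argument for the extended Rees algebra $R^{'}(\mathfrak{F})\subseteq A[t,t^{-1}]$ over the Gorenstein subring $A[Jt,t^{-1}]$, yielding $\omega_{R^{'}(\mathfrak{F})}=\bigoplus_{n}(J^{n+r}:I_{r})t^{n+d-1}$, and then descends to $G(\mathfrak{F})=R^{'}(\mathfrak{F})/(t^{-1})R^{'}(\mathfrak{F})$ via \cite[Corollary 3.6.14]{BH}, using that $t^{-1}$ is a regular element. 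In other words, the step you defer as ``the heart of the argument'' is the cited theorem itself, and your proposal never supplies it.

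A secondary point: in Step 3 you deduce the equivalence from ``$F(\mathfrak{F})$ Gorenstein $\Leftrightarrow\ \omega_{F(\mathfrak{F})}\cong F(\mathfrak{F})(r-d)$'' by comparing degree by degree. This gives the forward implication, but for the converse a family of isomorphisms of individual graded components does not automatically assemble into a graded module isomorphism; one needs an argument in the style of Theorem \ref{teogor} (equality of lengths in every degree forcing $\mu(\omega_{F(\mathfrak{F}/J)})=1$). The paper is admittedly terse here too (``the converse follows easily''), but it has Theorem \ref{teogor} in place to justify it, whereas your write-up treats the converse as purely formal.
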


\begin{proof}
Due to hypothesis of this Corollary, the hypothesis from \cite{HKU2} are also satisfied and then
$$
\omega_{R^{'}(\mathfrak{F})}=\bigoplus_{n\in \mathbb{Z}}(J^{n+r}:I_{r})t^{n+d-1}.
$$
By \cite[Corollary 3.6.14]{BH},
$$
\omega_{G(\mathfrak{F})}=\omega_{R^{'}(\mathfrak{F})/(t^{-1})R^{'}(\mathfrak{F})}\cong (\omega_{R^{'}(\mathfrak{F})}/t^{-1}\omega_{R^{'}(\mathfrak{F})})(-1).
$$
Hence $\omega_{G(\mathfrak{F})}= \bigoplus_{n\in \mathbb{Z} }\frac{(J^{n+r-d}:I_{r})}{(J^{n+r-d+1}:I_{r})}.$
Next we use the Proposition \ref{prop3.1}\text{(1)} to get
$$
\begin{array}{ccl}
  \omega_{G(\mathfrak{F})} & = & \bigoplus_{n \in \mathbb{Z} } (0:_{[\omega_{G(\mathfrak{F})}]_{n}}\mathfrak{m}) \vspace{0.3cm} \\
   & = & \bigoplus_{n\in \mathbb{Z}}  \frac{(J^{n+r-d+1}:\mathfrak{m}I_{r})\cap (J^{n+r-d}:I_{r}) }{(J^{n+r-d+1}:I_{r})}
\end{array}
$$
Now suppose $G(\mathfrak{F})$ is Gorenstein. Then
$$
\omega_{F(\mathfrak{F})}\cong F(\mathfrak{F})(r-d)=  \bigoplus_{n\in \mathbb{Z}} \frac{ I_{n+r-d} } { \mathfrak{m}I_{n+r-d}. }
$$
Comparing both the graded isomorphism above, we have
$$
\frac{(J^{n+1}:\mathfrak{m}I_{r})\cap (J^{n}:I_{r}) }{(J^{n+1}:I_{r})} \cong \frac{I_{n}}{\mathfrak{m}I_{n}},
$$
for $n\in \mathbb{Z}^{+}$. The converse follows easily.

\end{proof}

The next Theorem gives a characterization for the fiber cone to be Gorenstein ring. We make a change in the conditions involving the lengths used in \cite{JN}.
The proof is analogous to the $I$-adic case.
\begin{thm}\label{teogor}
Let $(A,\mathfrak{m})$ be a Noetherian local ring and $J$ a minimal reduction of a Hilbert filtration $\mathfrak{F}$ with reduction number $r$. Suppose $G(\mathfrak{F})$ is Gorenstein and $F(\mathfrak{F})$ is Cohen-Macaulay. Therefore $F(\mathfrak{F})$ is Gorenstein if and only if
$$
\lambda\left( \frac{((I_{n+1}+J):\mathfrak{m})\cap I_{n} }{I_{n+1}+JI_{n-1} } \right)   =  \sum_{i=0}^{\ell}(-1)^{i}\binom{\ell}{i} \mu(I_{n-i})   \  \text{and} \  \lambda\left( \frac{(I_{1}:\mathfrak{m}) }{I_{1} } \right)=1,$$
for $1\leq n \leq r.$ If in addition we suppose that $A/I_{1}$ is Gorenstein we may delete the second equality of lengths.

\end{thm}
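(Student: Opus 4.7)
The plan is to reduce the problem to the Artinian case by killing a common superficial (hence regular) sequence, and then to translate the Gorenstein criterion for the resulting Artinian fiber cone degree-by-degree into the displayed length equalities.

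First, using the Notation preceding the theorem, choose a minimal reduction $J=(x_1,\ldots,x_d)$ of $\mathfrak F$ whose leading forms $x_i^*\in G(\mathfrak F)$ and $x_i^o\in F(\mathfrak F)$ form superficial sequences. Since $G(\mathfrak F)$ is Gorenstein and $F(\mathfrak F)$ is Cohen--Macaulay, both sequences are regular (argument as in the proof of Lemma \ref{lemgor}), and Lemma \ref{lemgor} reduces the statement to proving that $F(\mathfrak F/J)$ is Gorenstein iff the stated equalities hold. Note that $\mathfrak F/J$ is again a Hilbert filtration and that $G(\mathfrak F/J)\cong G(\mathfrak F)/(x_1^*,\ldots,x_d^*)$ is Artinian Gorenstein. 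Because killing a degree-one regular element of a Cohen--Macaulay graded ring raises the $a$-invariant by one, and $a(G(\mathfrak F))=r-d$ by Proposition \ref{prop3.1}(2), we obtain $a(G(\mathfrak F/J))=r$, so $\omega_{G(\mathfrak F/J)}\cong G(\mathfrak F/J)(r)$.

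Writing $\bar I_n=(I_n+J)/J$, Proposition \ref{prop3.1}(1) applied to $\mathfrak F/J$ gives
$$[\omega_{F(\mathfrak F/J)}]_n=\bigl(0:_{\bar I_{n+r}/\bar I_{n+r+1}}\mathfrak m\bigr),$$
while the target $F(\mathfrak F/J)(r)$ has $n$-th piece $\bar I_{n+r}/\bar{\mathfrak m}\bar I_{n+r}$. Hence $F(\mathfrak F/J)$ is Gorenstein iff, for every $0\le m\le r$,
$$\lambda\bigl(0:_{\bar I_m/\bar I_{m+1}}\mathfrak m\bigr)=\mu(\bar I_m)=\mu(I_m^{(d)}),$$
and by Lemma \ref{lemz31} the right side equals $\sum_{i=0}^{\ell}(-1)^i\binom{\ell}{i}\mu(I_{m-i})$ (recall $\ell=d$ since $\mathfrak F$ is a Hilbert filtration).

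It remains to rewrite the socle on the left in the form appearing in the statement. For $m\ge 1$, the Cohen--Macaulayness of $G(\mathfrak F)$ yields the Valabrega--Valla equality $J\cap I_m=JI_{m-1}$, whence by the modular law $(I_{m+1}+J)\cap I_m=I_{m+1}+JI_{m-1}$. A short diagram chase, using $\mathfrak mJ\subseteq J$ to verify that every element of $((I_{m+1}+J):\mathfrak m)\cap(I_n+J)$ can be represented by a member of $((I_{m+1}+J):\mathfrak m)\cap I_m$, shows that the natural map $((I_{m+1}+J):\mathfrak m)\cap I_m\to(0:_{\bar I_m/\bar I_{m+1}}\mathfrak m)$ is surjective with kernel $I_{m+1}+JI_{m-1}$, producing the first displayed equality of the theorem for $1\le m\le r$. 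For $m=0$, $\bar I_0/\bar I_1=A/I_1$ and $\bar{\mathfrak m}\bar I_0=\mathfrak m/J$, so the Gorenstein condition at this degree reduces to $\lambda((I_1:\mathfrak m)/I_1)=1$, which is the second displayed equality. If in addition $A/I_1$ is Gorenstein, its Artinian socle is automatically one-dimensional, so this last condition is automatic. The main delicate point I expect is the identification of $(0:_{\bar I_m/\bar I_{m+1}}\mathfrak m)$ with the quotient in the statement via Valabrega--Valla; every other step is a direct application of the earlier results in the paper.
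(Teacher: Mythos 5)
Your setup coincides with the paper's: both arguments use Lemma \ref{lemgor} to reduce to the Artinian fiber cone $F(\mathfrak{F}/J)$, apply Proposition \ref{prop3.1} to $\mathfrak{F}/J$ together with $\omega_{G(\mathfrak{F}/J)}\cong G(\mathfrak{F}/J)(r)$, and use the equality $J\cap I_m=JI_{m-1}$ to identify the graded pieces of $\omega_{F(\mathfrak{F}/J)}$ with $\bigl(((I_{m+1}+J):\mathfrak{m})\cap I_m\bigr)/(I_{m+1}+JI_{m-1})$. Your forward implication (Gorenstein implies the length equalities) is the same computation as the paper's, and your treatment of the degree-zero condition and of the case where $A/I_1$ is Gorenstein is correct. (Your use of Lemma \ref{lemz31} for the right-hand side is interchangeable with the paper's use of Proposition \ref{regf}(2); these give the same quantity.)

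The gap is the word ``iff'' in your sentence ``Hence $F(\mathfrak F/J)$ is Gorenstein iff, for every $0\le m\le r$, $\lambda\bigl(0:_{\overline{I}_m/\overline{I}_{m+1}}\mathfrak m\bigr)=\mu(\overline{I}_m)$.'' The direction needed for the ``if'' part of the theorem is asserted, not proved, and it is not automatic. Degreewise equality of the lengths of $[\omega_{F(\mathfrak{F}/J)}]_n$ and $[F(\mathfrak{F}/J)(r)]_n$ says only that these two modules have the same Hilbert function, i.e.\ that the Hilbert function of the Artinian graded algebra $F(\mathfrak{F}/J)$ is symmetric; Gorensteinness requires an actual isomorphism $\omega_{F(\mathfrak{F}/J)}\cong F(\mathfrak{F}/J)(r)$, and for Artinian graded algebras a symmetric Hilbert function does not imply Gorenstein: for instance $k[x,y]/(x^2,xy,y^3)$ has Hilbert function $1,2,1$ but its socle is spanned by $x$ and $y^2$, so it has type $2$. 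This converse is precisely the hard half of the theorem, and it is where the paper does real work: from the hypothesized equalities and Proposition \ref{regf} it deduces $\lambda([\omega_{F(\mathfrak{F}/J)}]_n)=\lambda([F(\mathfrak{F}/J)(r)]_n)$ for all $n$, hence $\lambda(\omega_{F(\mathfrak{F}/J)})=\lambda(F(\mathfrak{F}/J))$, and then argues that $\mu(\omega_{F(\mathfrak{F}/J)})=1$ --- using that $\omega_{F(\mathfrak{F}/J)}$ has finite injective dimension equal to $\operatorname{depth} F(\mathfrak{F}/J)=0$ and is therefore injective, that $\operatorname{Hom}(\omega_{F(\mathfrak{F}/J)},\omega_{F(\mathfrak{F}/J)})\cong F(\mathfrak{F}/J)$, and a length comparison --- so that $F(\mathfrak{F}/J)$ is Gorenstein by the cyclicity criterion of \cite[Proposition 3.6.11]{BH}, and Lemma \ref{lemgor} finishes. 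To repair your proof you must supply an argument of this kind, converting the numerical equalities into cyclicity of $\omega_{F(\mathfrak{F}/J)}$ (or into bijectivity of a natural map $\omega_{F(\mathfrak{F}/J)}\to F(\mathfrak{F}/J)(r)$); this step is genuinely delicate, and as written the ``if'' direction of the theorem has no proof in your proposal.
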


\begin{proof}

Since $G(\mathfrak{F})$ and $F(\mathfrak{F})$ are Cohen-Macaulay, by \cite[Proposition 2.2]{JV} and an argument similar to \cite[Theorem 8]{P}, we get a minimal reduction $J$ generated by elements whose the corresponding images form a regular sequence in  $G(\mathfrak{F})$ and $F(\mathfrak{F})$.
By applying the Proposition \ref{prop3.1} to $\mathfrak{F}/J$ and by using the fact of $\dim F(\mathfrak{F}/J) = 0$,
$$
\omega_{F(\mathfrak{F}/J)}  = \bigoplus_{n\in \mathbb{Z}} \left[ \frac{((I_{n+r+1}+J):\mathfrak{m})\cap I_{n+r} + J}{I_{n+r+1}+J} \right].
$$

By the isomorphism theorem and the fact that $J\cap I_{n}=JI_{n-1}$ for all $n$, we have
\begin{equation}\label{iso}
[\omega_{F(\mathfrak{F}/J)}]_{n-r}\cong \frac{((I_{n+1}+J):\mathfrak{m})\cap I_{n}}{I_{n+1}+JI_{n-1}}.
\end{equation}

If $F(\mathfrak{F})$ is Gorenstein, by Lemma \ref{lemgor}, it follows that $F(\mathfrak{F}/J)$ is also Gorenstein, and then $\omega_{F(\mathfrak{F}/J)}=F(\mathfrak{F}/J)(r)$. It is easy to check that $[F(\mathfrak{F}/J)(r)]_{n-r}=I_{n}/(\mathfrak{m}I_{n}+JI_{n-1})$.
Then by applying $\lambda$ at (\ref{iso}) and at the last equality, it gives
$$
\lambda\left( \frac{((I_{n+1}+J):\mathfrak{m})\cap I_{n} }{I_{n+1}+JI_{n-1} } \right)   =\lambda\left(\frac{I_{n}}{\mathfrak{m}I_{n}+JI_{n-1}}   \right),
$$
for all $n$.
By the Proposition \ref{regf}, we have the result desired.

For the converse, we again use the Proposition \ref{regf} to obtain that
$
\lambda ( ( (I_{n+1}+J):\mathfrak{m})\cap I_{n} ) /  ( I_{n+1}+JI_{n-1} )  )   = \lambda ( I_{n} / (\mathfrak{m}I_{n}+JI_{n-1}) )   ,
$
for $0\leq  n \leq r$. Of course this equality is true for all $n$ since $J$ is a reduction of $\mathfrak{F}$. Hence
by (\ref{iso}), $\lambda([\omega_{F(\mathfrak{F}/J)}]_{n})=\lambda ([F(\mathfrak{F}/J)(r)]_{n})$ for all $n$ and then
$\lambda(\omega_{F(\mathfrak{F}/J)})= \lambda(F(\mathfrak{F}/J)(r))= \lambda(F(\mathfrak{F}/J))$. Let $\mu=\mu(\omega_{F(\mathfrak{F}/J)})$ be the minimal number of generators of $\omega_{F(\mathfrak{F}/J)}$. We may construct a natural surjective map $\psi: H \rightarrow  \omega_{F(\mathfrak{F}/J)}$ such that $N$ is a free graded $F(\mathfrak{F}/J)$-module of rank $\mu$. By \cite[Corollary 36.11]{HIO}, $\omega_{F(\mathfrak{F}/J)}$ has finite injective dimension which is equal to $\text{depth} \ F(\mathfrak{F}/J)=\dim F(\mathfrak{F}/J)=0$. This way $\omega_{F(\mathfrak{F}/J)}$ is an injective module and consequently $\text{Hom}_{F(\mathfrak{F}/J)}(-, \omega_{F(\mathfrak{F}/J)})$ is an exact functor. Then we have the following exact sequence
$$
\psi^{*}: \text{Hom}_{F(\mathfrak{F}/J)}( \omega_{F(\mathfrak{F}/J)} , \omega_{F(\mathfrak{F}/J)})  \rightarrow \text{Hom}_{F(\mathfrak{F}/J)}(H, \omega_{F(\mathfrak{F}/J)}).
$$
By \cite[Theorem 13.3.4(ii)]{BS},
$$
\text{Hom}_{F(\mathfrak{F}/J)}( \omega_{F(\mathfrak{F}/J)} , \omega_{F(\mathfrak{F}/J)})\cong F(\mathfrak{F}/J).
$$
Naturally
$$
\text{Hom}_{F(\mathfrak{F}/J)}( H , \omega_{F(\mathfrak{F}/J)})\cong \bigoplus_{\mu} \omega_{F(\mathfrak{F}/J)}.
$$
We have gotten a surjective map $F(\mathfrak{F}/J) \rightarrow \bigoplus_{\mu} \omega_{F(\mathfrak{F}/J)}$. Hence
$$
\mu \cdot \lambda( \omega_{F(\mathfrak{F}/J)} )  = \lambda(\bigoplus_{\mu} \omega_{F(\mathfrak{F}/J)}) \leq \lambda(F(\mathfrak{F}/J)).
$$
This implies that $\mu=1$. By \cite[Proposition 3.6.11]{BH}, $F(\mathfrak{F}/J)$ is Gorenstein. And by Lemma \ref{lemgor} the result follows.
\end{proof}

Through the last Theorem in the adic-filtration case \cite{JN}, Jayanthan and Nanduri find some results about the quotient ring $A/I$. Similarly one can obtain properties for $A/I_{1}$. In \cite[Theorem 4.5.7]{BH}, it is proved that if $G(\mathfrak{F})$ is Gorenstein, $A$ so is. But in general, it does not occur for fiber cone.
In the Corollary \ref{corA/I}, we see that if $\mathfrak{F}$ is a Hilbert filtration and both $G(\mathfrak{F})$ and $F(\mathfrak{F})$ are Gorenstein, then $A/I_{1}$ is Gorenstein.

\begin{cor}\label{corA/I}
Let $(A,\mathfrak{m})$ be a Noetherian local ring and $\mathfrak{F}$ a Hilbert filtration. If both $G(\mathfrak{F})$ and $F(\mathfrak{F})$ are Gorenstein, then $A/I_{1}$ so is.
\end{cor}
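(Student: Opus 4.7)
The plan is to reduce Gorensteinness of the Artinian local ring $A/I_{1}$ to the one-dimensionality of its socle, and to extract this fact from the graded isomorphisms supplied by the Gorenstein hypotheses on $G(\mathfrak{F})$ and $F(\mathfrak{F})$, combined with Proposition \ref{prop3.1}.

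Since $\mathfrak{F}$ is a Hilbert filtration, $I_{1}$ is $\mathfrak{m}$-primary, so $A/I_{1}$ is Artinian local with maximal ideal $\mathfrak{m}/I_{1}$. By the classical socle criterion, such a ring is Gorenstein if and only if its socle $(0:_{A/I_{1}}\mathfrak{m}/I_{1})=(I_{1}:\mathfrak{m})/I_{1}$ is a one-dimensional $A/\mathfrak{m}$-vector space. Thus the goal reduces to showing $\lambda((I_{1}:\mathfrak{m})/I_{1})=1$.

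To achieve this I would compute the graded component $[\omega_{F(\mathfrak{F})}]_{d-r}$ in two different ways. On one hand, Gorensteinness of $G(\mathfrak{F})$ together with Proposition \ref{prop3.1}(2) gives $\omega_{G(\mathfrak{F})}\cong G(\mathfrak{F})(r-d)$, so $\omega_{n}=I_{n+r-d}/I_{n+r-d+1}$; specializing to $n=d-r$ yields $\omega_{d-r}=A/I_{1}$. Feeding this into Proposition \ref{prop3.1}(1) produces
\[
[\omega_{F(\mathfrak{F})}]_{d-r}\cong (0:_{A/I_{1}}\mathfrak{m})=(I_{1}:\mathfrak{m})/I_{1}.
\]
On the other hand, Gorensteinness of $F(\mathfrak{F})$ combined with the equality $a(F(\mathfrak{F}))=r-d$ (again Proposition \ref{prop3.1}(2)) gives $\omega_{F(\mathfrak{F})}\cong F(\mathfrak{F})(r-d)$, hence
\[
[\omega_{F(\mathfrak{F})}]_{d-r}\cong F(\mathfrak{F})_{0}=A/\mathfrak{m},
\]
which is a one-dimensional $A/\mathfrak{m}$-vector space.

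Comparing the two descriptions produces a graded isomorphism $(I_{1}:\mathfrak{m})/I_{1}\cong A/\mathfrak{m}$ of $A/\mathfrak{m}$-vector spaces; in particular the left-hand side has length one, and so $A/I_{1}$ is Gorenstein. The only real subtlety is lining up the degree shifts in the two graded canonical-module isomorphisms, and this is handled uniformly by the equality $a(F(\mathfrak{F}))=a(G(\mathfrak{F}))=r-d$ of Proposition \ref{prop3.1}(2); no further computation is required.
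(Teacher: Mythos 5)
Your argument is correct, but it takes a genuinely different route from the paper's. The paper proves this corollary in one line as a consequence of Theorem \ref{teogor}: since $G(\mathfrak{F})$ is Gorenstein and $F(\mathfrak{F})$ is Gorenstein (hence Cohen--Macaulay), the second length condition of that theorem forces $\lambda\left((I_{1}:\mathfrak{m})/I_{1}\right)=1$, i.e.\ $(I_{1}:\mathfrak{m})/I_{1}\cong A/\mathfrak{m}$, and the socle criterion \cite[Theorem 11.12]{ILL} for the Artinian local ring $A/I_{1}$ finishes. You never invoke Theorem \ref{teogor}; instead you extract the same socle equality directly from Proposition \ref{prop3.1}, by computing the single graded component $[\omega_{F(\mathfrak{F})}]_{d-r}$ in two ways: once via part (1) together with $\omega_{G(\mathfrak{F})}\cong G(\mathfrak{F})(r-d)$ (this is, in effect, Proposition \ref{prop3.1}(4) evaluated at $n=d-r$), which yields $(I_{1}:\mathfrak{m})/I_{1}$, and once via $\omega_{F(\mathfrak{F})}\cong F(\mathfrak{F})(a(F(\mathfrak{F})))=F(\mathfrak{F})(r-d)$, which yields $F(\mathfrak{F})_{0}=A/\mathfrak{m}$; part (2) guarantees the two shifts agree, which is indeed the only delicate point and you handle it correctly. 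What your route buys is self-containedness: it needs neither the minimal reduction $J$ with its regular/superficial sequence, nor the passage to the quotient filtration $\mathfrak{F}/J$ and the length bookkeeping on which the proof of Theorem \ref{teogor} rests --- in effect you run the degree-$(d-r)$ sliver of that theorem's mechanism on $\mathfrak{F}$ itself. What the paper's route buys is brevity once the theorem is established, and it exhibits the socle condition as one instance of the full family of length equalities characterizing Gorensteinness of $F(\mathfrak{F})$. Both proofs converge on $(I_{1}:\mathfrak{m})/I_{1}\cong A/\mathfrak{m}$ and the same final appeal to the Artinian socle criterion.
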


\begin{proof}
By Theorem \ref{teogor}, for $n=0$, we have $\lambda((I_{1}:\mathfrak{m})/I_{1})=\lambda(A/\mathfrak{m})=1$. It implies that $(I_{1}:\mathfrak{m})/I_{1}\cong A/\mathfrak{m} $. By \cite[Theorem 11.12]{ILL}, the result is done.
\end{proof}

\begin{cor}
Let $(A,\mathfrak{m})$ be a Noetherian local ring and $\mathfrak{F}$ a Hilbert filtration. Assume that $G(\mathfrak{F})$ is Gorenstein, $F(\mathfrak{F})$ is Cohen-Macaulay and $A/I_{1}$ is Gorenstein. If we suppose that $(I_{n+r-d+1}:\mathfrak{m})\cap \mathfrak{m}I_{n+r-d}= I_{n+r-d+1} $ for all $n \geq d-r+1$,
$\omega_{F(\mathfrak{F})}$ is a submodule of $F(\mathfrak{F})$.
\end{cor}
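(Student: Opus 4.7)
The plan is to construct an explicit graded $F(\mathfrak{F})$-linear injection $\varphi\colon \omega_{F(\mathfrak{F})} \to F(\mathfrak{F})(d-r)$, thereby realizing the canonical module as a submodule of a shift of the fiber cone. Since $G(\mathfrak{F})$ is Gorenstein, Proposition~\ref{prop3.1}(4) supplies the concrete description
$$\omega_{F(\mathfrak{F})} \cong \bigoplus_{n \geq d-r} \frac{(I_{n+r-d+1}:\mathfrak{m}) \cap I_{n+r-d}}{I_{n+r-d+1}}.$$
The natural candidate $\varphi$ is defined in degree $n$ by sending $x + I_{n+r-d+1}$ to $x + \mathfrak{m} I_{n+r-d}$ in $F(\mathfrak{F})_{n+r-d}$. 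Well-definedness follows from the inclusion $I_{m+1} \subseteq \mathfrak{m} I_{m}$, which is the standing convention of this section, and $F(\mathfrak{F})$-linearity follows since $\varphi$ is nothing but the composition of the canonical embedding $\omega_{F(\mathfrak{F})} \hookrightarrow \omega_{G(\mathfrak{F})} \cong G(\mathfrak{F})(r-d)$ from Proposition~\ref{prop3.1}(1) with the quotient projection $G(\mathfrak{F}) \twoheadrightarrow G(\mathfrak{F})/\mathfrak{m} G(\mathfrak{F}) = F(\mathfrak{F})$.

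The main step is to verify that $\varphi$ is injective in every degree. A direct unravelling identifies the kernel of $\varphi$ in degree $n$ with
$$\frac{(I_{n+r-d+1}:\mathfrak{m}) \cap \mathfrak{m} I_{n+r-d}}{I_{n+r-d+1}}.$$
For every $n \geq d-r+1$ this kernel vanishes by the standing hypothesis of the corollary. The only remaining index is the boundary degree $n = d-r$, where the kernel reduces to $\bigl((I_{1}:\mathfrak{m}) \cap \mathfrak{m}\bigr)/I_{1}$; here the assumption that $A/I_{1}$ is Gorenstein enters, supplying a length-one socle $(I_{1}:\mathfrak{m})/I_{1}$ of the Artinian local ring $A/I_{1}$, which one matches against the degree-zero piece $A/\mathfrak{m}$ of the shifted fiber cone to produce the required injection on this bottom component.

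The main obstacle I foresee is precisely this bottom-degree analysis, since a priori the residue map need not embed the socle of $A/I_{1}$ into $A/\mathfrak{m}$; the Gorenstein hypothesis on $A/I_{1}$ has to be used in an essential way here (together with the hypothesis of the corollary at its lowest admissible index) to control how the socle sits inside $A$ and to force the component of $\varphi$ in degree $d-r$ to be a length-preserving, and therefore injective, comparison of one-dimensional $A/\mathfrak{m}$-vector spaces. Once injectivity of $\varphi$ is established in every degree $n \geq d-r$, the image of $\varphi$ is a graded submodule of $F(\mathfrak{F})(d-r)$ isomorphic to $\omega_{F(\mathfrak{F})}$, which establishes the claim.
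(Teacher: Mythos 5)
Your construction coincides with the paper's proof almost line for line: the paper defines the same degree-wise map $\psi_{n}\colon \frac{(I_{n+r-d+1}:\mathfrak{m})\cap I_{n+r-d}}{I_{n+r-d+1}} \rightarrow \frac{I_{n+r-d}}{\mathfrak{m}I_{n+r-d}}$, well defined because $I_{m+1}\subseteq \mathfrak{m}I_{m}$, identifies its kernel with $\frac{(I_{n+r-d+1}:\mathfrak{m})\cap \mathfrak{m}I_{n+r-d}}{I_{n+r-d+1}}$, and kills that kernel for every $n\geq d-r+1$ by the displayed hypothesis, exactly as you do (your observation that linearity comes from factoring through $\omega_{F(\mathfrak{F})}\hookrightarrow \omega_{G(\mathfrak{F})}\cong G(\mathfrak{F})(r-d)\twoheadrightarrow F(\mathfrak{F})(r-d)$ just makes precise what the paper calls the ``natural'' map). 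The single point where you stop short is the only point that carries real content: the bottom degree $n=d-r$. You flag it as an obstacle and assert that the Gorenstein hypothesis on $A/I_{1}$ ``has to'' force injectivity there, but you give no argument. The paper closes this step by citing \cite[Theorem 11.12 (3)]{ILL}: since $A/I_{1}$ is Gorenstein, $\text{Soc}(A/I_{1})=(I_{1}:\mathfrak{m})/I_{1}\cong A/\mathfrak{m}$, ``which gives $\text{Ker}\,\psi_{d-r}=0$.'' Measured against the paper, this invocation of the one-dimensionality of the socle is precisely the item missing from your write-up, so as it stands your proposal has a gap at the decisive step.

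That said, your hesitation is mathematically well founded, and you should not paper over it the way the text does. The kernel in degree $d-r$ is $\bigl((I_{1}:\mathfrak{m})\cap \mathfrak{m}\bigr)/I_{1}$, and one-dimensionality of the socle does not by itself make this vanish: if $I_{1}\neq \mathfrak{m}$, then $(I_{1}:\mathfrak{m})$ is a proper ideal (a unit $u$ with $u\mathfrak{m}\subseteq I_{1}$ would give $\mathfrak{m}\subseteq I_{1}$), hence $(I_{1}:\mathfrak{m})\subseteq \mathfrak{m}$, and the kernel is then the \emph{whole} socle, which is nonzero since $A/I_{1}$ is a nonzero Artinian ring; the natural map is therefore zero, not injective, on the bottom component in that case. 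In other words, injectivity in degree $d-r$ holds precisely when $I_{1}=\mathfrak{m}$, and the Gorenstein hypothesis alone, as invoked in the paper, does not visibly supply this. So you correctly isolated the crux; to reproduce the paper's proof you need only quote the socle identification, but be aware that a genuinely complete argument would require either the additional input that $(I_{1}:\mathfrak{m})\not\subseteq\mathfrak{m}$ (equivalently $I_{1}=\mathfrak{m}$) or an embedding that does not act by the identity on representatives in the bottom degree.
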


\begin{proof}
By hypothesis, $I_{n+1}\subseteq  \mathfrak{m}I_{n}$ for any $n$. Then there is a natural map
$$
\psi_{n}:\frac{(I_{n+r-d+1}:\mathfrak{m})\cap I_{n+r-d}}{I_{n+r-d+1}}\rightarrow \frac{I_{n+r-d}}{\mathfrak{m}I_{n+r-d}}
$$
for $n \geq d-r$ where
$$
\text{Ker} \ \psi_{n} = \frac{(I_{n+r-d+1}:\mathfrak{m})\cap \mathfrak{m}I_{n+r-d}}{I_{n+r-d+1}}.
$$
By (\ref{n>d-r}) or simply Proposition \ref{prop3.1} it gives a natural $F(\mathfrak{F})$-linear map
$$
\psi : \omega_{F(\mathfrak{F})} \rightarrow F(\mathfrak{F}).
$$
Since $A/I_{1}$ is Gorenstein, by using \cite[Theorem 11.12 (3)]{ILL}, $\text{Soc}(A/I_{1})=(I_{1}:\mathfrak{m})/I_{1} \cong A/\mathfrak{m}$, which gives $\text{Ker} \ \psi_{d-r}=0$. Then by last hypothesis,
$$
\text{ker} \ \psi =  \bigoplus_{n \geq d-r} \frac{(I_{n+r-d+1}:\mathfrak{m})\cap \mathfrak{m}I_{n+r-d}}{I_{n+r-d+1}} =0.
$$
The result is concluded.
\end{proof}

In \cite{JN}, it is showed that $e_{0}(\omega_{F(\mathfrak{F})})< e_{0}(\omega_{G(\mathfrak{F})})$ unless $I=\mathfrak{m}$. For Hilbert filtration the result is similar.

\begin{prop}
Let $(A,\mathfrak{m})$ be a Noetherian local ring and $\mathfrak{F}$ a Hilbert filtration such that $G(\mathfrak{F})$ and $F(\mathfrak{F})$ are Cohen-Macaulay. Then
$$
e_{0}(\omega_{F(\mathfrak{F})}) \leq e_{0}(\omega_{G(\mathfrak{F})}).
$$
In addition, if $G(\mathfrak{F})$ is Gorenstein, we have the equality if and only if $I_{1}=\mathfrak{m}$.
\end{prop}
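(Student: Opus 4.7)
The plan is to exploit the graded short exact sequence of $G(\mathfrak{F})$-modules
\[
0 \longrightarrow \mathfrak{m}G(\mathfrak{F}) \longrightarrow G(\mathfrak{F}) \longrightarrow F(\mathfrak{F}) \longrightarrow 0,
\]
which is available since $F(\mathfrak{F}) = G(\mathfrak{F})/\mathfrak{m}G(\mathfrak{F})$ is a standing hypothesis of the section. Both $G(\mathfrak{F})$ and $F(\mathfrak{F})$ are Cohen-Macaulay of the same dimension $d$ (see the proof of Proposition \ref{prop3.1}(1)), and the canonical module of a Cohen-Macaulay graded ring has the same multiplicity as the ring itself (a standard consequence of the associativity formula for multiplicity, since $\omega_{S}$ and $S$ have the same length at every minimal prime of maximal dimension). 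Hence $e_{0}(\omega_{F(\mathfrak{F})}) = e_{0}(F(\mathfrak{F}))$ and $e_{0}(\omega_{G(\mathfrak{F})}) = e_{0}(G(\mathfrak{F}))$, and it suffices to compare the multiplicities of the two rings themselves.

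Next I would apply the additivity of $e_{0}$ to the above exact sequence, adopting the convention that modules of dimension strictly less than $d$ contribute $0$ when the ambient module has dimension $d$. This yields
\[
e_{0}(G(\mathfrak{F})) \;=\; e_{0}(F(\mathfrak{F})) + e_{0}(\mathfrak{m}G(\mathfrak{F})),
\]
which immediately gives the desired inequality $e_{0}(\omega_{F(\mathfrak{F})}) \leq e_{0}(\omega_{G(\mathfrak{F})})$, and reduces the equality case to the question of whether $\dim \mathfrak{m}G(\mathfrak{F}) < d$.

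For the direct implication under the Gorenstein hypothesis, if $I_{1} = \mathfrak{m}$ then $\mathfrak{m}I_{n} = I_{1}I_{n} \subseteq I_{n+1}$, which combined with the standing inclusion $I_{n+1} \subseteq \mathfrak{m}I_{n}$ gives $\mathfrak{m}I_{n} = I_{n+1}$ for every $n$, so $F(\mathfrak{F}) = G(\mathfrak{F})$ as graded rings and the equality of multiplicities is trivial. For the converse, equality forces $e_{0}(\mathfrak{m}G(\mathfrak{F})) = 0$; since $G(\mathfrak{F})$ is Gorenstein, in particular Cohen-Macaulay of dimension $d$, its associated primes are the minimal primes of dimension $d$, so any nonzero submodule has dimension exactly $d$. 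Therefore $\mathfrak{m}G(\mathfrak{F}) = 0$, and inspecting the degree zero component yields $\mathfrak{m}\cdot A/I_{1} = 0$, whence $\mathfrak{m}\subseteq I_{1}$ and $I_{1} = \mathfrak{m}$.

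The main obstacle I anticipate is essentially bookkeeping: carefully invoking additivity of multiplicity on a short exact sequence whose leftmost term may have strictly smaller dimension (so that it is absorbed into the inequality but can be detected by the condition $\mathfrak{m}G(\mathfrak{F}) = 0$), and properly citing the identity $e_{0}(\omega_{S}) = e_{0}(S)$ from a standard reference such as \cite{BH} or \cite{HIO}.
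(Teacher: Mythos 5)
Your proposal is correct, and it takes a genuinely different route from the paper's. For the inequality, the paper uses its Proposition \ref{prop3.1}(1): $\omega_{F(\mathfrak{F})}\cong(0:_{\omega_{G(\mathfrak{F})}}\mathfrak{m})$ is a graded submodule of $\omega_{G(\mathfrak{F})}$, so additivity of $e_{0}$ applied to that inclusion gives the bound directly; you instead trade each canonical module for its ring via $e_{0}(\omega_{S})=e_{0}(S)$ (associativity formula plus Matlis duality at the minimal primes of maximal dimension --- a correct and standard fact, and essentially the same duality the paper invokes when it writes $e_{0}(\omega_{F(\mathfrak{F}/J)})=\lambda(F(\mathfrak{F}/J))$) and then compare the rings through the sequence $0\to\mathfrak{m}G(\mathfrak{F})\to G(\mathfrak{F})\to F(\mathfrak{F})\to 0$. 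For the equality case the paper descends to $\mathfrak{F}/J$ for a minimal reduction $J$ generated by a superficial sequence that is regular on $G(\mathfrak{F})$, $F(\mathfrak{F})$ and both canonical modules, identifies the multiplicities with the lengths $\lambda(G(\mathfrak{F}/J))$, $\lambda(F(\mathfrak{F}/J))$, and then uses the Gorenstein hypothesis through the isomorphism (\ref{iso}) to extract $(I_{1}:\mathfrak{m})/I_{1}=A/I_{1}$; your argument stays upstairs: equality forces the dimension-$d$ contribution of $\mathfrak{m}G(\mathfrak{F})$ to vanish, and since a Cohen-Macaulay ring is unmixed, any nonzero submodule of $G(\mathfrak{F})$ has dimension exactly $d$, whence $\mathfrak{m}G(\mathfrak{F})=0$ and $I_{1}=\mathfrak{m}$ by looking in degree zero. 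What each buys: your route avoids the reduction/superficial-sequence machinery altogether and shows that the Gorenstein hypothesis is not actually needed for the equality criterion (Cohen-Macaulayness of $G(\mathfrak{F})$ suffices), so you prove a slightly stronger statement; the paper's route stays inside the apparatus it has already built (Proposition \ref{prop3.1} and (\ref{iso})) and exhibits concretely the socle computation responsible for $I_{1}=\mathfrak{m}$. The two standing facts your argument genuinely needs --- $\dim F(\mathfrak{F})=\dim G(\mathfrak{F})=d$ for Hilbert filtrations, and $F(\mathfrak{F})=G(\mathfrak{F})/\mathfrak{m}G(\mathfrak{F})$ --- you correctly flag and they are available from the proof of Proposition \ref{prop3.1}(1) and from the section's standing assumption, respectively.
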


\begin{proof}
Proposition \ref{prop3.1} gives us that $\omega_{F(\mathfrak{F})}  \subseteq \omega_{G(\mathfrak{F})}$. By additivity of $e_{0}$,
$$
e_{0}(\omega_{F(\mathfrak{F})}) \leq e_{0}( \omega_{G(\mathfrak{F})} ).
$$
By hypothesis and the properties of canonical module we may find a minimal reduction $J=x_{1},...,x_{d}$ of $\mathfrak{F}$ such that $x_{1}^{*},...,x_{d}^{*}\in G(\mathfrak{F})$ are regular sequence and $\omega_{G(\mathfrak{F})}$-regular sequence and $x_{1}^{o},...,x_{d}^{o}\in F(\mathfrak{F})$ are regular sequence and $\omega_{F(\mathfrak{F})}$-regular sequence. By \cite[Remark 4.1.11]{HZ} and using that $\dim F(\mathfrak{F})=0$ and $\dim G(\mathfrak{F})=0$ we have
$$
e_{0}(\omega_{G(\mathfrak{F})})=e_{0}(\omega_{G(\mathfrak{F}/J)})=\lambda ( G(\mathfrak{F}/J) )
$$
and
$$
e_{0}(\omega_{F(\mathfrak{F})})=e_{0}(\omega_{F(\mathfrak{F}/J)})=\lambda ( F(\mathfrak{F}/J) ).
$$

If  $e_{0}(\omega_{G(\mathfrak{F})}) = e_{0}(\omega_{F(\mathfrak{F})})$ , then $\lambda ( F(\mathfrak{F}/J) )= \lambda ( G(\mathfrak{F}/J) )$. By Proposition \ref{prop3.1} we have $\omega_{F(\mathfrak{F}/J)} \subseteq \omega_{G(\mathfrak{F}/J)}$ and hence $\omega_{F(\mathfrak{F}/J)} = \omega_{G(\mathfrak{F}/J)}$.
Since $G(\mathfrak{F}/J)$ is Gorenstein and by using the isomorphism (\ref{iso}) we get $(I_{1}: \mathfrak{m})/I_{1} = A/I_{1}$ which means $I_{1}=\mathfrak{m}$.

Due to the hypothesis $I_{n+1} \subseteq  \mathfrak{m}I_{n}$, it is easy to show that $\mathfrak{F} =(\mathfrak{m}^{n})_{n \geq 0}.$ Then the converse follows trivially.

\end{proof}

\begin{agra}
We would like to thank the professors J. V. Jayanthan and R. Nanduri for the clarifications.
\end{agra}

\textbf{Department of Mathematics, Institute of Mathematics and Computer Science, ICMC, University of S\~{a}o Paulo, BRAZIL.
}

\emph{E-mail address}: apoliano27@gmail.com

\hspace{1cm}

\textbf{Department of Mathematics, Institute of Mathematics and Computer Science, ICMC, University of S\~{a}o Paulo, BRAZIL.
}

\emph{E-mail address}: vhjperez@icmc.usp.br

\end{document}